\let\OLDthebibliography\thebibliography
\renewcommand\thebibliography[1]{
  \OLDthebibliography{#1}
  \setlength{\parskip}{0pt}
  \setlength{\itemsep}{0pt plus 0.3ex}
}
\newtheorem{thm}{Theorem}[section]
\newtheorem{lemma}[thm]{Lemma}
\newtheorem{cor}[thm]{Corollary}
\newtheorem{conj}[thm]{Conjecture}
\theoremstyle{definition}
\theoremstyle{remark}
\numberwithin{equation}{section}
\newcommand{\mmod}[1]{{\,\,\mathrm{mod}\,\,#1}}
\newcommand*\wrapletters[1]{\wr@pletters#1\@nil}
\def\wr@pletters#1#2\@nil{#1\allowbreak\if&#2&\else\wr@pletters#2\@nil\fi}
\def\alp{{\alpha}} 
\def\bet{{\beta}}  
\def\gam{{\gamma}} 
\def\del{{\delta}}
\def\lam{{\lambda}}
\def\ome{{\omega}}  
\def\eps{\varepsilon}
\def\le{\leqslant} \def\ge{\geqslant}  
\def \leq {\leqslant} \def \geq {\geqslant}
\def \bN {\mathbb N}
\def \bQ {\mathbb Q}
\def \bR {\mathbb R}
\def \bZ {\mathbb Z}
\def \bn {\mathbf n}
\def \bv {\mathbf v}
\def \bx {\mathbf x}
\def \by {\mathbf y}
\def \bzero {\mathbf 0}
\def \balp {{\boldsymbol{\alp}}}
\def \bgam {{\boldsymbol{\gam}}}
\def \bdel {{\boldsymbol{\del}}}
\def \cB {\mathcal B}
\def \cH {\mathcal H}
\def \cJ {\mathcal J}
\def \cN {\mathcal N}
\def \cR {\mathcal R}
\def \cS {\mathcal S}
\def \dim {\mathrm{dim}}
\def \det {\mathrm{det}}
\def \vol {\mathrm{vol}}
\def \sgn {{\mathrm{sgn}}}
\begin{document}
\title[Higher-rank Bohr sets and diophantine approximation]{Higher-rank Bohr sets and multiplicative diophantine approximation}
\author[Sam Chow]{Sam Chow}
\address{Mathematical Institute, University of Oxford, Andrew Wiles Building, Radcliffe Observatory Quarter,
Woodstock Road, Oxford OX2 6GG, United Kingdom; and Department of Mathematics, University of York, Heslington, York, YO10 5DD, United Kingdom}
\email{sam.chow@maths.ox.ac.uk}
\author[Niclas Technau]{Niclas Technau}
\address{Department of Mathematics, University of York, Heslington, York, YO10 5DD, United Kingdom}
\email{niclas.technau@york.ac.uk}
\subjclass[2010]{11J83, 11J20, 11H06, 52C07}
\keywords{Metric diophantine approximation, geometry of numbers, additive combinatorics}
\thanks{}
\date{}
\begin{abstract} Gallagher's theorem is a sharpening and extension of the Littlewood conjecture that holds for almost all tuples of real numbers. We provide a fibre refinement, solving a problem posed by Beresnevich, Haynes and Velani in 2015. Hitherto, this was only known on the plane, as previous approaches relied heavily on the theory of continued fractions. Using reduced successive minima in lieu of continued fractions, we develop the structural theory of Bohr sets of arbitrary rank, in the context of diophantine approximation. In addition, we generalise the theory and result to the inhomogeneous setting. To deal with this inhomogeneity, we employ diophantine transference inequalities in lieu of the three distance theorem.
\end{abstract}
\maketitle

\section{Introduction}
\label{intro}

\subsection{Results}

The Littlewood conjecture (circa 1930) is perhaps the most sought-after result in diophantine approximation. It asserts that if $\alp, \bet \in \bR$ then 
\[
\liminf_{n \to \infty} n \| n \alp \| \cdot \| n \bet \| = 0.
\]
However, Gallagher's theorem \cite{Gal1962} implies that if $k \ge 2$ then for almost all tuples $(\alp_1, \ldots, \alp_k) \in \bR^k$ the stronger statement
\begin{equation} \label{GallagherConclusion}
\liminf_{n \to \infty} n (\log n)^k \| n \alp_1 \| \cdots \| n \alp_k \| = 0
\end{equation}
is valid. Beresnevich, Haynes and Velani \cite[Theorem 2.1 and Remark 2.4]{BHV2016} showed that if $k = 2$ then for any $\alp_1 \in \bR$ the statement \eqref{GallagherConclusion} holds for almost all $\alp_2 \in \bR$. On higher-dimensional fibres, the problem has remained visibly open until now \cite[Problem 2.1]{BHV2016}. We solve this problem.

\begin{thm} \label{solution}
If $k \ge 2$ and $\alp_1, \ldots, \alp_{k-1} \in \bR$ then for almost all $\alp_k \in \bR$ we have
\[
\liminf_{n \to \infty} n (\log n)^k \| n \alp_1 \| \cdots \|n \alp_k \| = 0.
\]
\end{thm}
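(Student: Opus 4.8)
The plan is to recast the statement as a full-measure assertion for a $\limsup$ set and to prove it by a divergence Borel--Cantelli argument, using the structure theory of higher-rank Bohr sets in place of the continued fractions that sufficed when $k=2$. Fix $\balp=(\alp_1,\dots,\alp_{k-1})$. For $c>0$ and $n\ge 3$ put
\[
A_n=A_n(c)=\Bigl\{\alp_k\in[0,1):\ \|n\alp_k\|<\frac{c}{n(\log n)^k\|n\alp_1\|\cdots\|n\alp_{k-1}\|}\Bigr\},
\]
a union of $n$ intervals about the fractions $a/n$. Since $\liminf_n n(\log n)^k\|n\alp_1\|\cdots\|n\alp_k\|\le c$ for every $\alp_k\in\limsup_n A_n(c)$ and is trivially $\ge 0$, it suffices to show that $\limsup_n A_n(c)$ has full Lebesgue measure for each $c>0$. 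If $\|n\alp_1\|\cdots\|n\alp_{k-1}\|\le 2c/(n(\log n)^k)$ infinitely often --- in particular if some $\alp_i\in\bQ$ --- then $A_n=[0,1)$ for those $n$ and we are done; otherwise $|A_n|\asymp c/(n(\log n)^k\|n\alp_1\|\cdots\|n\alp_{k-1}\|)$ for all large $n$. By a zero--one law for $\limsup$ sets of this shape it is enough to prove $|\limsup_n A_n|>0$, and by the Kochen--Stone lemma it suffices to produce $\cN\subseteq\bN$ with $\sum_{n\in\cN}|A_n|=\infty$ along which $\sum_{m,n\in\cN,\,m,n\le Q}|A_m\cap A_n|\ll\bigl(\sum_{n\in\cN,\,n\le Q}|A_n|\bigr)^2$.

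The engine is a description of the Bohr sets $B_N(\bdel)=\{1\le n\le N:\ \|n\alp_i\|<\del_i\ (1\le i\le k-1)\}$ via reduced successive minima. Attach to $\balp,N,\bdel$ the lattice $\Lam=\Lam_{N,\bdel}=\diag(N^{-1},\del_1^{-1},\dots,\del_{k-1}^{-1})\cdot\{(n,n\alp_1-a_1,\dots,n\alp_{k-1}-a_{k-1}):n,a_i\in\bZ\}\subset\bR^k$, of covolume $(N\del_1\cdots\del_{k-1})^{-1}$, whose points in the unit box encode $B_N(\bdel)$. Reduction theory provides a reduced basis $\bv_1,\dots,\bv_k$ with $\|\bv_i\|\asymp\lam_i$, the successive minima, where $\lam_1\cdots\lam_k\asymp(N\del_1\cdots\del_{k-1})^{-1}$; expanding lattice points in this basis exhibits $B_N(\bdel)$ as a proper generalised arithmetic progression of rank $\#\{i:\lam_i\le 1\}$ and spread $\asymp N$, with $|B_N(\bdel)|\asymp N\del_1\cdots\del_{k-1}\cdot\prod_i\max(1,\lam_i)$. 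Thus $|B_N(\bdel)|\gg N\del_1\cdots\del_{k-1}$ always, with equality (up to constants) precisely when the lattice is balanced, all $\lam_i\asymp 1$; and, being a spread-out GAP, $\sum_{n\in B_N(\bdel)}n^{-1}\asymp N^{-1}|B_N(\bdel)|\log N$ when $|B_N(\bdel)|$ is of ideal size. One then shows that the $\lam_i$ depend slowly on $\log N$ and on the $\log(1/\del_i)$, so that for any prescribed product $\del_1\cdots\del_{k-1}$ there are arbitrarily large scales $N$, of positive lower logarithmic density, at which $\Lam_{N,\bdel}$ is balanced; this recurrence of balanced scales is the higher-rank substitute for taking $N$ to be a convergent denominator of $\alp_1$.

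Granting this, the divergence follows. By the layer-cake identity,
\[
\sum_{n\le N}\frac{1}{n\|n\alp_1\|\cdots\|n\alp_{k-1}\|}=\int_{(0,\infty)^{k-1}}\Bigl(\sum_{n\in B_N(1/t_1,\dots,1/t_{k-1})}\frac1n\Bigr)\,dt_1\cdots dt_{k-1},
\]
and on the region $t_i>2$, $t_1\cdots t_{k-1}<N$, where the Bohr set is of ideal size, the bracket is $\gg t_1^{-1}\cdots t_{k-1}^{-1}\log N$ by the GAP estimate; the substitution $t_i=2e^{s_i}$ then yields $\sum_{n\le N}(n\|n\alp_1\|\cdots\|n\alp_{k-1}\|)^{-1}\gg(\log N)^k$. (In the degenerate cases where this integral region is unavailable the sum is in fact polynomially large, and the conclusion is easier.) Abel summation against $(\log n)^{-k}$ then gives $\sum_{n=3}^N(n(\log n)^k\|n\alp_1\|\cdots\|n\alp_{k-1}\|)^{-1}\gg\log\log N\to\infty$, so $\sum_{n\in\cN}|A_n|=\infty$ for a suitable $\cN$. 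For quasi-independence one notes that $A_m$ and $A_n$ are unions of intervals about fractions with denominators $m$ and $n$, so $A_m\cap A_n$ is governed by $\gcd(m,n)$ and by a Bohr-type condition for the pair, and the same successive-minima bookkeeping bounds $\sum_{m,n\le Q}|A_m\cap A_n|$ by $\ll(\sum_{n\le Q}|A_n|)^2$, the large-$\gcd$ cross terms being absorbed by the Bohr set size estimate. Feeding both bounds into Kochen--Stone and the zero--one law completes the proof.

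The main obstacle is the structure theory of the second paragraph and its quantitative use in the third: in rank one the three distance theorem and continued fractions pin down the relevant sets essentially exactly, whereas in arbitrary rank one must extract the ideal-size lower bounds, the spreading of the GAP, the recurrence of balanced scales, and the overlap estimates purely from reduced successive minima, with the slow variation of the $\lam_i$ across scales being the crucial and most delicate input.
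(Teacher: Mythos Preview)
Your route differs substantially from the paper's. The paper deduces Theorem~\ref{solution} from Theorem~\ref{main} via a dichotomy on $\omega^\times(\balp)$: if $\omega^\times(\balp)\ge\frac{k-1}{k-2}$ then $\|n\alp_1\|\cdots\|n\alp_{k-1}\|<n^{-1-1/(k-1)}$ infinitely often and the liminf vanishes for \emph{every} $\alp_k$; if $\omega^\times(\balp)<\frac{k-1}{k-2}$ one applies the Duffin--Schaeffer theorem (Theorem~\ref{DSthm}) to an auxiliary function $\Psi$ supported on $G=\{n:\|n\alp_i\|\ge n^{-\sqrt\eps}\ \forall i\}$. The Bohr-set/successive-minima machinery is then used not to bound overlaps $|A_m\cap A_n|$ but to verify the extra hypothesis~\eqref{additional}, i.e.\ $\sum_{n\le N}\frac{\varphi(n)}{n}\Psi(n)\gg\sum_{n\le N}\Psi(n)$, by showing that $\varphi(n)/n$ averages to a positive constant over the generalised arithmetic progressions sitting inside the Bohr sets (Lemma~\ref{averagegood}). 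The quasi-independence is never proved from scratch: it is inherited from the classical overlap lemma for \emph{coprime} approximations that underlies Theorem~\ref{DSthm}.

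This is where your proposal has a genuine gap. The assertion that ``the same successive-minima bookkeeping bounds $\sum_{m,n\le Q}|A_m\cap A_n|\ll(\sum|A_n|)^2$, the large-$\gcd$ cross terms being absorbed by the Bohr set size estimate'' does not hold up. The overlap $|A_m\cap A_n|$ is governed by $\gcd(m,n)$, whereas the Bohr-set structure concerns the distribution of the values $\|n\alp_i\|$; it says nothing about the multiplicative structure of the integers $n$ lying in the Bohr set. The standard bound leaves a sum of the shape $\sum_{m,n}\gcd(m,n)\min(\Phi(m)/m,\Phi(n)/n)$, and taming this for a non-monotone $\Phi$ without the coprimality device is precisely the content of the Duffin--Schaeffer conjecture --- no shortcut via successive minima is known, and the Duffin--Schaeffer counterexample shows it can genuinely fail. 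The paper's introduction of the totient weight and the restriction to $G$ is exactly the mechanism that converts the problem into one where Theorem~\ref{DSthm} applies, sidestepping a direct overlap estimate; your sketch offers no substitute for this. Two secondary points: the ``recurrence of balanced scales'' you posit is not needed in the paper --- under the exponent hypothesis and the restriction $\del_i\ge N^{-\eps}$, \emph{every} relevant scale is balanced (Corollary~\ref{card}), which is what drives Lemma~\ref{upper}; and your unrestricted lower bound $\sum_{n\le N}(n\|n\alp_1\|\cdots\|n\alp_{k-1}\|)^{-1}\gg(\log N)^k$ is flagged in the paper's open-problems section as unresolved for $k\ge3$.
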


What we show is more general. The \emph{multiplicative exponent} of the vector $\balp = (\alp_1, \ldots, \alp_{k-1})$, denoted $\omega^\times(\balp)$, is the supremum of the set of real numbers $w$ such that, for infinitely many $n \in \bN$, we have
\[
\| n \alp_1 \| \cdots \| n \alp_{k-1} \| < n^{-w}.
\]

\begin{thm} \label{main} Let $k \ge 2$, let $\alp_1, \ldots, \alp_{k-1}, \gam_1, \ldots, \gam_{k-1} \in \bR$, and assume that the multiplicative exponent of $\balp = (\alp_1, \ldots, \alp_{k-1})$ satisfies $\omega^\times(\balp) < \frac{k-1}{k-2}$. Let $\psi: \bN \to \bR_{\ge 0}$ be a decreasing function such that 
\begin{equation} \label{divergence}
\sum_{n=1}^\infty \psi(n) (\log n)^{k-1} = \infty.
\end{equation}
Then for almost all $\alp \in \bR$ there exist infinitely many $n \in \bN$ such that
\[
\| n \alp_1 - \gam_1 \| \cdots \| n \alp_{k-1} - \gam_{k-1} \| \cdot  \| n \alp \| < \psi(n).
\]
\end{thm}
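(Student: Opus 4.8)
The target is a "divergence Borel–Cantelli" statement for the set of $\alpha$ with infinitely many good $n$. The plan is to set up a carefully weighted Borel–Cantelli framework on the $\alpha$-fibre, where the weights encode the constraint coming from $\balp,\bgam$. For each $n$, the set of $\alpha\in[0,1)$ with $\|n\alpha\|<\psi(n)/\big(\|n\alp_1-\gam_1\|\cdots\|n\alp_{k-1}-\gam_{k-1}\|\big)$ is a union of $n$ intervals of total length roughly $\min\{1,\psi(n)/P_n\}$, where $P_n=\prod_{i<k}\|n\alp_i-\gam_i\|$. Summing crude lengths over $n$ is hopeless because $P_n$ can be tiny; the whole point is that $n$ for which $P_n$ is small are rare, by the hypothesis $\omega^\times(\balp)<\tfrac{k-1}{k-2}$. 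So the first main step is to restrict to a good set $\mathcal{N}$ of moduli $n$ on which $P_n$ is not too small — say $P_n\ge n^{-\varepsilon}$ outside a sparse exceptional set — and to show, using the definition of the multiplicative exponent, that discarding the exceptional $n$ does not kill the divergence \eqref{divergence}. This is essentially a counting estimate: the number of $n\le N$ with $\prod\|n\alp_i-\gam_i\|<n^{-w}$ is $o(N)$ (indeed $O(N^{1-\delta})$) when $w>\omega^\times(\balp)$, after transferring the inhomogeneous product to the homogeneous one; here the \emph{diophantine transference inequalities} mentioned in the abstract replace the three-distance theorem used in the $k=2$ case, and the constraint $\omega^\times(\balp)<\tfrac{k-1}{k-2}$ is exactly what makes the exceptional set negligible relative to the logarithmic weight.

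The second main step is a quasi-independence / overlap estimate. One wants a variance bound
\[
\sum_{m,n\le N,\ m,n\in\mathcal N}\big(|A_m\cap A_n|-|A_m||A_n|\big)\ll \Big(\sum_{n\le N,\ n\in\mathcal N}|A_n|\Big)^{1+o(1)},
\]
where $A_n$ is the $n$-th target set in the fibre, so that the Erdős–Gál / Chung–Erdős divergence Borel–Cantelli lemma applies and yields a positive-measure (hence, by a zero–one law or a localisation argument, full-measure) set of $\alpha$ lying in infinitely many $A_n$. The correlation $|A_m\cap A_n|$ is governed by how the two arithmetic progressions with moduli $m$ and $n$ interlock, which after clearing denominators is a gcd/Bohr-set computation: the overlap is controlled by the structure of the set of $\alpha$ that are simultaneously close to $\tfrac{1}{m}\bZ$ and $\tfrac{1}{n}\bZ$ with the prescribed tolerances. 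This is where the "structural theory of higher-rank Bohr sets" does its work — one needs that these Bohr sets (intersections of several arithmetic constraints) have the expected size and are well-distributed, which is handled via reduced successive minima of the associated lattices in place of continued-fraction convergents. I expect this overlap/quasi-independence estimate to be the main obstacle: in the plane ($k=2$) it is a clean consequence of the three-distance theorem and continued fractions, but for $k\ge 3$ one must show that the relevant rank-$(k-1)$ Bohr sets do not conspire to create anomalously large correlations, and controlling them uniformly in $m,n$ — especially near the places where the successive minima degenerate — is the delicate point.

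The final step is bookkeeping: assemble the pieces over dyadic ranges of $n$, use the divergence of $\sum\psi(n)(\log n)^{k-1}$ (surviving the removal of the exceptional moduli) to guarantee $\sum_{n\in\mathcal N}|A_n|=\infty$, apply the Chung–Erdős inequality to get $\limsup A_n$ of positive measure in $[0,1)$, and then upgrade to full measure — either by a standard ergodic/zero–one law for $\limsup$ sets defined by such congruence conditions, or by running the argument inside an arbitrary subinterval and invoking the Lebesgue density theorem. Periodicity in $\alpha$ reduces everything to $[0,1)$, and the decreasing hypothesis on $\psi$ is used to pass between the sum $\sum\psi(n)(\log n)^{k-1}$ and the dyadically grouped estimates. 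Throughout, the inhomogeneous shifts $\gam_i$ enter only through the transference step and otherwise cause no new difficulty, which is why the homogeneous Theorem \ref{solution} follows as the special case $\gam_i=0$ (where one may even invoke the three-distance theorem directly, recovering the Beresnevich–Haynes–Velani argument when $k=2$).
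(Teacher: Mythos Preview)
Your plan diverges from the paper's in a way that misidentifies where the main difficulty lies, and as written it has a real gap.

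The paper does \emph{not} prove a bespoke overlap estimate for the sets $A_n=\{\alpha:\|n\alpha\|<\Psi(n)\}$. Instead it applies the Duffin--Schaeffer theorem (Theorem~\ref{DSthm}) to the restricted approximating function $\Psi$ as a black box, and the entire analytic effort goes into verifying the extra hypothesis
\[
\sum_{n\le N}\frac{\varphi(n)}{n}\Psi(n)\ \gg\ \sum_{n\le N}\Psi(n).
\]
By partial summation this reduces to showing $T_N(\balp,\bgam)\asymp T_N^*(\balp,\bgam)\asymp N(\log N)^{k-1}$, where these are the sums \eqref{T}--\eqref{Tstar}. This is precisely where the higher-rank Bohr sets enter, and they are Bohr sets in the \emph{$n$-variable}: $B_\balp^\bgam(N;\bdel)=\{n:|n|\le N,\ \|n\alp_i-\gam_i\|\le\del_i\}$. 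The structural lemmas (Lemmas~\ref{inner} and \ref{outer}) show that such a set contains, and is contained in, a rank-$k$ generalised arithmetic progression of the expected size with each side-length at least $N^\eps$; this is what allows one to average $\varphi(n)/n$ over $B$ and show it is $\gg 1$ (Lemma~\ref{averagegood}). Your proposal places the Bohr-set machinery at the $|A_m\cap A_n|$ overlap in the $\alpha$-fibre, which is not where it is used; the overlap you describe is handled wholesale by the Duffin--Schaeffer theorem.

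Relatedly, your first step is too optimistic. Passing to a good set $\mathcal N$ with $P_n\ge n^{-\eps}$ and arguing that ``the exceptional $n$ are sparse'' is not enough to conclude $\sum_{n\in\mathcal N}|A_n|=\infty$. On the good set one has $|A_n|\asymp \psi(n)/P_n$, and the divergence of $\sum_{n\in\mathcal N}\psi(n)/P_n$ is not a consequence of \eqref{divergence} plus a counting bound on bad $n$: one genuinely needs that $\sum_{n\le N,\,n\in\mathcal N}1/P_n\gg N(\log N)^{k-1}$. That lower bound is exactly the content of Lemma~\ref{lower}, and its proof requires the inner-structure lemma for the Bohr sets $B_\balp^\bgam(N;\bdel)$ together with the Bugeaud--Laurent transference (Theorem~\ref{BL}) to locate an inhomogeneous base point. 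Likewise, the matching upper bound $T_N(\balp,\bgam)\ll N(\log N)^{k-1}$ (Lemma~\ref{upper}) needs the cardinality bound of Corollary~\ref{card}. None of this is a ``counting of exceptional $n$'' argument.

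If you tried to push your direct quasi-independence route through, the $|A_m\cap A_n|$ computation would force you back to controlling $\sum_n\varphi(n)\Psi(n)/n$ versus $\sum_n\Psi(n)$ anyway---that is essentially how the Duffin--Schaeffer theorem is proved---so you would end up needing the $n$-variable Bohr-set averaging in any case. The cleaner route, taken in the paper, is to verify the Duffin--Schaeffer hypotheses and let that theorem absorb the overlap analysis and the zero--one law.
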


\noindent The $k=2$ case was established in \cite{Cho2018}. In that case, the condition becomes $\omega^\times(\alp) < \infty$, which is equivalent to $\alp$ being irrational and non-Liouville.

Theorem \ref{solution} will follow from Theorem \ref{main}, except in the case $\omega^\times(\balp) \ge \frac{k-1}{k-2}$. However, in the latter case there exist arbitrarily large $n \in \bN$ for which
\[
\| n\alp_1 \| \cdots \| n \alp_{k-1} \| < n^{-1 - \frac1{k-1}}.
\]
For these $n$, we thus have
\[
n (\log n)^k \| n \alp_1 \| \cdots \| n \alp_k \| = o(1)
\]
for all $\alp_k$. This completes the deduction of Theorem \ref{solution} assuming Theorem \ref{main}.

The hypothesis $\omega^\times(\balp) < \frac{k-1}{k-2}$ is generic. Indeed, it follows directly from the work of Hussain and Simmons \cite[Corollary 1.4]{HS2018}, or alternatively from the prior but weaker conclusions of \cite[Remark 1.2]{BV2015}, that the exceptional set 
\[
\Bigl \{ \balp \in \bR^{k-1}: \omega^\times(\balp) \ge \frac{k-1}{k-2} \Bigr \}
\]
has Hausdorff codimension $\frac1{2k-3}$ in $\bR^{k-1}$. This is much stronger than the assertion that the set of exceptions has Lebesgue measure zero. 

Some condition on $(\alp_1,\ldots,\alp_{k-1})$ is needed for Theorem \ref{main}. For example, if $(\alp_1, \ldots, \alp_{k-1}) \in \bQ^{k-1}$, then the $n \alp_i$ take on finitely many values modulo 1, so if the $\gam_i$ avoid these then 
\[
\| n \alp_1 - \gam_1 \| \cdots \| n \alp_{k-1} - \gam_{k-1} \| \gg 1.
\]
Khintchine's theorem (Theorem \ref{Khintchine}) then refutes the conclusion of Theorem \ref{main} in this scenario, for appropriate $\psi$.

Theorem \ref{main} is sharp, in the sense that the divergence hypothesis \eqref{divergence} is necessary, as we now explain. Gallagher's work \cite{Gal1962} shows, more precisely, the following (see \cite[Remark 1.2]{BV2015}).

\begin{thm} [Gallagher] \label{Gallagher}
Let $k \ge 2$, and write $\mu_k$ for $k$-dimensional Lebesgue measure. Let $\psi: \bN \to \bR_{\ge 0}$ be a decreasing function, and denote by $W_k^\times(\psi)$ the set of $(\alp_1, \ldots, \alp_k) \in [0,1]^k$ such that
\[
\| n \alp_1 \| \cdots \| n \alp_k \| < \psi(n)
\]
has infinitely many solutions $n \in \bN$. Then
\[
\mu_k(W_k^\times(\psi)) = \begin{cases}
0, &\text{if } \sum_{n=1}^\infty \psi(n) (\log n)^{k-1} < \infty \\
1, &\text{if } \sum_{n=1}^\infty \psi(n) (\log n)^{k-1} = \infty.
\end{cases}
\]
\end{thm}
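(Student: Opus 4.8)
The plan is to express $W_k^\times(\psi)$ as a $\limsup$ set and run the two Borel--Cantelli lemmas. Write
\[
E_n = \bigl\{ \balp \in [0,1]^k : \|n\alp_1\| \cdots \|n\alp_k\| < \psi(n) \bigr\},
\]
so that $W_k^\times(\psi) = \limsup_{n \to \infty} E_n$. The starting point is a volume estimate. For each $i$ the pushforward of Lebesgue measure on $[0,1]$ under $\alp_i \mapsto \|n\alp_i\|$ agrees with that under $x \mapsto \|x\|$, and the coordinates decouple, so Fubini gives $\mu_k(E_n) = \meas\{ \bx \in [0,1]^k : \|x_1\| \cdots \|x_k\| < \psi(n) \}$; a change of variables identifies this with the upper tail of a Gamma distribution with $k$ degrees of freedom, whence
\[
\mu_k(E_n) \asymp_k \psi(n) \bigl( 1 + \log(1/\psi(n)) \bigr)^{k-1} \quad \text{if } \psi(n) \le 2^{-k},
\]
while $\mu_k(E_n) = 1$ when $\psi(n) \ge 2^{-k}$. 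If $\psi$ is bounded below then $W_k^\times(\psi) = [0,1]^k$ and $\sum_n \psi(n)(\log n)^{k-1}$ diverges, so the theorem is trivial; henceforth assume $\psi(n) \to 0$.

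For the convergence half I would split $\bN$ into $\{ n : \psi(n) \ge n^{-2} \}$ and its complement. On the first set $\log(1/\psi(n)) \le 2 \log n$, so $\mu_k(E_n) \ll_k \psi(n)(\log n)^{k-1}$; on the second set $t \mapsto t(\log(1/t))^{k-1}$ is increasing near $0$, so $\mu_k(E_n) \ll_k n^{-2}(\log n)^{k-1}$. Hence $\sum_n \mu_k(E_n) \ll_k \sum_n \psi(n)(\log n)^{k-1} + 1 < \infty$, and the convergence Borel--Cantelli lemma gives $\mu_k(W_k^\times(\psi)) = 0$.

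The divergence half is the substantive one, and here I would invoke the divergence Borel--Cantelli lemma in its quasi-independence (second moment) form: if $\sum_n \mu_k(E_n) = \infty$ and
\[
\sum_{m, n \le N} \mu_k(E_m \cap E_n) \ll \Bigl( \sum_{n \le N} \mu_k(E_n) \Bigr)^2
\]
for infinitely many $N$, then $\mu_k(\limsup_n E_n) > 0$. Divergence of $\sum_n \mu_k(E_n)$ follows from the hypothesis $\sum_n \psi(n)(\log n)^{k-1} = \infty$ via the volume estimate together with a monotonicity argument (using that $\psi$ is decreasing, one checks that $\sum_n \mu_k(E_n)$ and $\sum_n \psi(n)(\log n)^{k-1}$ converge or diverge together). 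The main obstacle is the correlation estimate. To control $\mu_k(E_m \cap E_n)$ I would dyadically decompose each constraint $\|m\alp_1\|\cdots\|m\alp_k\| < \psi(m)$ as a union, over choices $s_i = 2^{-a_i}$ with $\sum_i a_i$ near $\log_2(1/\psi(m))$, of the product boxes $\{ \|m\alp_i\| < s_i \text{ for all } i \}$, and similarly for $n$ with parameters $t_i$; intersecting, $E_m \cap E_n$ lies in a union of genuine product sets, so Fubini reduces matters to the one-dimensional overlaps $\meas\{ \alp \in [0,1] : \|m\alp\| < s, \ \|n\alp\| < t \}$. These are estimated by counting incidences between the $m$ intervals of the first set and the $n$ intervals of the second, yielding a bound of the shape $\ll st + \gcd(m,n)\cdot(\text{error in } s,t,m,n)$; taking the product of $k$ such bounds and summing over the dyadic parameters recovers the expected main term $\mu_k(E_m)\mu_k(E_n)$, while the off-diagonal contributions are absorbed after summation over $m,n \le N$ using the classical bound $\sum_{m,n \le N} \gcd(m,n)/(mn) \ll (\log N)^2$, suitably dressed with the logarithmic factors.

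Finally, this only delivers $\mu_k(\limsup_n E_n) > 0$, so to reach full measure I would run the entire argument localised to an arbitrary ball $B \subseteq [0,1]^k$: once $n$ is large relative to the radius of $B$, the set $E_n$ meets $B$ in proportion $\asymp \mu_k(B)$ and the correlation bounds localise in the same way, so the quasi-independence Borel--Cantelli lemma gives $\mu_k(\limsup_n E_n \cap B) \gg \mu_k(B)$ with implied constant independent of $B$. The Lebesgue density theorem then forces $\mu_k(W_k^\times(\psi)) = 1$ (alternatively, a zero--one law for such $\limsup$ sets would do the same job).
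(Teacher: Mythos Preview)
The paper does not prove this theorem: it is quoted from Gallagher's 1962 paper \cite{Gal1962}, with the precise formulation attributed to \cite[Remark~1.2]{BV2015}, and is used only to explain why the divergence hypothesis \eqref{divergence} in Theorem~\ref{main} is sharp. There is therefore no ``paper's own proof'' to compare against.

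That said, your outline follows the standard second-moment route to Gallagher's theorem and is correct in its architecture. Two places deserve more than a sentence of justification. First, the claim that $\sum_n \mu_k(E_n)$ and $\sum_n \psi(n)(\log n)^{k-1}$ diverge together is true but not a one-liner from monotonicity; the clean reduction is to replace $\psi$ by $\tilde\psi(n) = \min(\psi(n), 1/n)$, check (using that $\psi$ is decreasing) that $\sum_n \tilde\psi(n)(\log n)^{k-1}$ still diverges, and then observe that $\tilde\psi(n) \le 1/n$ forces $\log(1/\tilde\psi(n)) \ge \log n$, so your volume estimate gives $\mu_k(E_n) \gg \tilde\psi(n)(\log n)^{k-1}$ directly. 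Second, the overlap estimate after the dyadic splitting is genuinely the heart of the matter: your plan (reduce by Fubini to one-dimensional overlaps, bound those via the classical $\gcd(m,n)$ estimate, take the $k$-fold product, and sum) is the right one, but tracking the logarithmic weights through the summation is real work and is essentially the content of Gallagher's paper. Your endgame---either localisation plus Lebesgue density, or an appeal to the zero--one law of \cite{Gal1961}---is standard and correct.
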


\noindent In particular, the divergence part of this statement is sharp. Theorem \ref{main} is stronger still, so it must also be sharp, insofar as it is necessary to assume \eqref{divergence}.

Some readers may be curious about the multiplicative Hausdorff theory. Owing to the investigations of Beresnevich--Velani \cite[\S1]{BV2015} and Hussain--Simmons \cite{HS2018}, we now understand that genuine `fractal' Hausdorff measures are insensitive to the multiplicative nature of such problems. With $k \in \bZ_{\ge 2}$ and $\psi: \bN \to \bR_{\ge 0}$, let $W_k^\times(\psi)$ be as in Theorem \ref{Gallagher}, and denote by $W_k(\psi)$ the set of $(\alp_1,\ldots,\alp_k) \in [0,1]^k$ for which 
\[
\max( \| n \alp_1 \|, \ldots, \| n \alp_k \| ) < \psi(n)
\]
has infinitely many solutions $n \in \bN$. In light of \cite[Corollary 1.4]{HS2018} and \cite[Theorem 4.12]{BRV2016}, we have the Hausdorff measure identity
\[
\cH^s( W_k^\times (\psi)) = \cH^{s- (k-1)} (W_1(\psi)) \qquad (k-1 < s < k).
\]
Loosely speaking, this reveals that multiplicatively approximating $k$ real numbers at once is the same as approximating one of the $k$ numbers, save for a set of zero Hausdorff $s$-measure. This is in stark contrast to the Lebesgue case $s=k$, wherein there are extra logarithms in the multiplicative setting (compare Theorems \ref{Gallagher} and \ref{Khintchine}). As discussed in \cite{BV2015, HS2018}, if $s > k$ then $\cH^s( W_k^\times (\psi)) = 0$, irrespective of $\psi$, while if $s \le k-1$ then $\cH^s( W_k^\times (\psi)) = \infty$, so long as $\psi$ does not vanish identically.

\subsection{Techniques}
\label{techniques}

The proof of Theorem \ref{main} parallels \cite{Cho2018}, with a more robust approach needed for the structural theory of Bohr sets. Recalling that
\[
\balp = (\alp_1, \ldots, \alp_{k-1}), \qquad \bgam = (\gam_1, \ldots, \gam_{k-1})
\]
are fixed, we introduce the auxiliary approximating function $\Phi = \Phi_\balp^\bgam$ given by
\begin{equation} \label{PhiDef}
\Phi(n) = \frac{\psi(n)}{\|n \alp_1 - \gam_1 \| \cdots \| n \alp_{k-1} - \gam_{k-1} \| }.
\end{equation}
The conclusion of Theorem \ref{main} is equivalent to the assertion that for almost all $\alp \in \bR$ there exist infinitely many $n \in \bN$ such that
\[
\| n \alp \| < \Phi(n).
\]
If $\Phi$ were monotonic, then Khintchine's theorem \cite[Theorem 2.3]{BRV2016} would be a natural and effective approach.

\begin{thm}[Khintchine's theorem] \label{Khintchine}
Let $\Phi: \bN \to \bR_{\ge 0}$. Then the measure of the set
\[
\{ \alp \in [0,1]: \| n \alp \| < \Phi(n) \text{ for infinitely many } n \in \bN \}
\]
is 
\[
\begin{cases} 
0, &\text{if } \sum_{n=1}^\infty \Phi(n) < \infty \\
1, &\text{if } \sum_{n=1}^\infty \Phi(n) = \infty \text{ and } \Phi \text{ is monotonic}.
\end{cases}
\]
\end{thm}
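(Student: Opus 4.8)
\emph{Proof proposal.} Write $A_n = \{ \alp \in [0,1] : \|n\alp\| < \Phi(n) \}$ and let $W(\Phi) = \limsup_{n \to \infty} A_n$ denote the set in the statement; throughout, $\abs{\,\cdot\,}$ is Lebesgue measure on $[0,1]$. Since $\|n\alp\| < \Phi(n)$ means that $\alp$ lies within $\Phi(n)/n$ of a rational $p/n$, the set $A_n$ is a union of at most $n+1$ arcs of length $2\Phi(n)/n$, whence $\abs{A_n} = \min(2\Phi(n),1)$. For the convergence case this already suffices: if $\sum_n \Phi(n) < \infty$ then $\sum_n \abs{A_n} < \infty$, so the first Borel--Cantelli lemma gives $\abs{W(\Phi)} = 0$. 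No monotonicity is needed here.

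The divergence case is the substantive one, and my plan is to combine a zero--one law with a second-moment (quasi-independence) estimate. By Gallagher's zero--one law, $\abs{W(\Phi)} \in \{0,1\}$ for any $\Phi$, so it is enough to show $\abs{W(\Phi)} > 0$. Discarding the trivial case where $\Phi(n) \ge \tfrac12$ for infinitely many $n$, we have $\sum_{n \le N} \abs{A_n} \asymp \sum_{n \le N} \Phi(n) \to \infty$, so by the divergence form of the Borel--Cantelli lemma (the Chung--Erd\H{o}s inequality) it suffices to prove
\[
\sum_{m,n \le N} \abs{A_m \cap A_n} \ll \Bigl( \sum_{n \le N} \Phi(n) \Bigr)^2
\]
for infinitely many $N$. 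The crux is thus a bound for the correlations $\abs{A_m \cap A_n}$, which I would obtain by a lattice-point count: reducing to the coprime case $\gcd(m,n) = 1$ via $\alp \mapsto \gcd(m,n)\alp$, one estimates the measure of those $\alp$ lying in an arc of radius $\Phi(m)/a$ about some $p/a$ and simultaneously in an arc of radius $\Phi(n)/b$ about some $q/b$, by counting the integer solutions of $bp - aq = k$ with $\abs{k}$ small. The hypothesis that $\Phi$ is \emph{decreasing} (so that $\Phi(n) \le \Phi(m)$ and $\Phi(n)/n \le \Phi(m)/m$ when $m < n$) is exactly what is needed to dominate the resulting off-diagonal error terms by the main term.

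The step I expect to be the main obstacle is precisely this off-diagonal estimate when $\sum_n \Phi(n)$ diverges very slowly (for instance $\Phi(n) \asymp 1/(n\log n)$): the crude correlation bound throws up error terms of the shape $\gcd(m,n)\,\Phi(\max(m,n))/\max(m,n)$, whose sum over $m, n \le N$ carries a divisor-function weight and can exceed $\bigl(\sum_{n \le N}\Phi(n)\bigr)^2$, so that one must argue more carefully — exploiting that the excess correlation is concentrated near rationals of small denominator and hence costs little measure. It is here that monotonicity enters in an essential way; for unrestricted $\Phi$ the statement is the Duffin--Schaeffer conjecture, the recent theorem of Koukoulopoulos and Maynard, which lies much deeper. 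The classical alternative sidesteps the second moment through continued fractions: by the best-approximation property of the convergents $p_j/q_j$ and the estimate $\|q_j\alp\| \asymp 1/q_{j+1}$, monotonicity of $\Phi$ reduces ``$\alp \in W(\Phi)$'' to ``$a_{j+1} \gg 1/(q_j\Phi(q_j))$ for infinitely many $j$'', which one then settles for almost every $\alp$ by L\'evy's conditional Borel--Cantelli lemma together with the Gauss--Kuzmin statistics of the partial quotients. Either way, the argument leans on one-dimensional arithmetic with no obvious counterpart for several linear forms — the theory of continued fractions, or an explicit description of the sets $A_m \cap A_n$ — which is what makes the higher-rank Bohr-set machinery developed in this paper necessary.
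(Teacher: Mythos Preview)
The paper does not prove Theorem~\ref{Khintchine}; it is quoted as a classical result, with a pointer to \cite[Theorem~2.3]{BRV2016}, and is used only as background motivation. There is therefore no in-paper argument to compare your proposal against.

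As for the proposal itself: the convergence half is complete. For the divergence half you correctly identify the two standard routes (second moment plus Gallagher's zero--one law, or continued fractions and Gauss--Kuzmin statistics), and you are candid that the naive overlap bound leaves an error sum of the shape $\sum_{m<n\le N}\gcd(m,n)\Phi(n)/n$ which can dominate $(\sum_{n\le N}\Phi(n))^2$ when the series diverges slowly. The classical repair you gesture at but do not carry out is to exploit monotonicity to reduce to approximating functions that are constant on dyadic blocks (Cassels' device), after which the quasi-independence estimate goes through; alternatively one follows Sprind\v{z}uk or Harman with a more careful pair count. So your write-up is an accurate roadmap rather than a self-contained proof --- which is entirely in keeping with how the paper treats this theorem.
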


For any $n \in \bN$ the function $\alp \mapsto \| n \alp \|$ is periodic modulo 1, so Khintchine's theorem implies that if $\Phi$ is monotonic and $\sum_{n=1}^\infty \Phi(n) = \infty$ then for almost all $\bet \in \bR$ the inequality $\| n \bet \| < \Phi(n)$ holds for infinitely many $n \in \bN$. The specific function $\Phi$ defined in \eqref{PhiDef} is very much \textbf{not} monotonic, so for Theorem \ref{main} our task is more demanding. We place the problem in the context of the Duffin--Schaeffer conjecture \cite{DS1941}.

\begin{conj}[Duffin--Schaeffer conjecture, 1941]\label{DSconj}
Let $\Phi: \bN \to \bR_{\ge 0}$ satisfy 
\begin{equation} \label{DShyp}
\sum_{n=1}^\infty \frac{\varphi(n)}n \Phi(n) = \infty.
\end{equation}
Then for almost all $\bet \in \bR$ the inequality
\[
| n \bet - r | < \Phi(n)
\]
holds for infinitely many coprime pairs $(n,r) \in \bN \times \bZ$.
\end{conj}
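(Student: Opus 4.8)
The plan is to reduce the statement, via a zero--one law and a second-moment argument, to a purely arithmetic overlap estimate, and then to establish that estimate by the GCD-graph compression method. For $n \in \bN$ set
\[
A_n = \bigcup_{\substack{0 \le r \le n \\ \gcd(r,n) = 1}} \Bigl( \tfrac{r}{n} - \tfrac{\Phi(n)}{n}, \ \tfrac{r}{n} + \tfrac{\Phi(n)}{n} \Bigr),
\]
regarded modulo $1$, so that the conclusion is the assertion $\mu(\limsup_n A_n) = 1$, where $\mu$ is Lebesgue measure on $[0,1]$. A zero--one law of Gallagher type (the same tool underlying Theorem \ref{Gallagher}) gives $\mu(\limsup_n A_n) \in \{0,1\}$, so it is enough to prove that the measure is positive. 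By the divergence hypothesis \eqref{DShyp} and the Erd\H{o}s--Chung second-moment (``Borel--Cantelli with near-independence'') lemma, it suffices to exhibit a set $S \subseteq \bN$ with $\sum_{n \in S} \mu(A_n) = \infty$ and a constant $C$ such that, for infinitely many $N$,
\[
\sum_{\substack{m, n \in S \cap [1,N]}} \mu(A_m \cap A_n) \le C \Bigl( \sum_{n \in S \cap [1,N]} \mu(A_n) \Bigr)^2 .
\]

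Next I would carry out the usual preliminary reductions. Discarding the $n$ with $\Phi(n) \ge 1/2$ (whose contribution is harmless after trivially adjusting the others), one may assume $\Phi(n) \le 1/2$, so that the arcs in $A_n$ are disjoint and $\mu(A_n) \asymp \varphi(n)\Phi(n)/n$; then one passes to a dyadic piece on which $\varphi(n)\Phi(n)/n$ lies in a fixed range, and optionally restricts to $n$ with controlled prime factorisation. For $m \ne n$ a direct count of the arc intersections gives $\mu(A_m \cap A_n) \ll \mu(A_m)\mu(A_n)\, A(\gcd(m,n))$ up to a genuinely lower-order term, where $A(d) = \prod_{p \mid d}\frac{p}{p-1}$ is a suitably truncated ``failure of independence'' factor. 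Thus the target inequality is reduced to showing that, for the weighted set $S$,
\[
\sum_{m, n \in S} \frac{\varphi(m)\Phi(m)}{m}\cdot\frac{\varphi(n)\Phi(n)}{n}\cdot A(\gcd(m,n)) \ll \Bigl( \sum_{n \in S} \frac{\varphi(n)\Phi(n)}{n} \Bigr)^2 ,
\]
i.e.\ that one can choose $S$ so that its self-overlap under the kernel $A(\gcd(\cdot,\cdot))$ is comparable to the square of its total mass. Equivalently, $S$ should have bounded \emph{quality}.

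The main obstacle is exactly this last estimate, and this is where I would invoke (or reconstruct) the GCD-graph argument of Koukoulopoulos and Maynard. One encodes a hypothetical failure as a weighted graph $G$ whose vertices are the relevant integers $n$, weighted by $\varphi(n)\Phi(n)/n$, with edges recording, prime by prime, the contribution of $\gcd(m,n)$ to $A(\gcd(m,n))$; the quality of $G$ is roughly the ratio of its edge mass to the square of its vertex mass. If the quality is large, a pigeonhole and regularisation step extracts a nearly regular subgraph of comparably large quality, and one then runs an iterative \emph{compression}: for a prime $p$ that divides the $\gcd$ of many edges, replace the vertex set by the sub-multiset of elements divisible by $p$ (or by those coprime to $p$), arguing that at each stage the quality does not decrease while the combinatorial structure becomes more rigid. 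Iterating over the primes in the relevant range forces the surviving integers to carry an atypically large number of prime factors, which contradicts the Hardy--Ramanujan/Landau bound $\#\{n \le x : \omega(n) = j\} \ll \frac{x}{\log x}\,\frac{(\log\log x + O(1))^{j-1}}{(j-1)!}$. This contradiction caps the quality, delivers the overlap estimate, and hence $\mu(\limsup_n A_n) > 0$; the zero--one law then upgrades this to measure $1$.

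I expect the compression step to be by far the hardest part: before it was available, the overlap estimate was only known under extra hypotheses (the ``extra divergence'' theorems of Pollington--Vaughan, and later work of Haynes--Pollington--Velani and Aistleitner et al.), and it is precisely the passage to unconditional quality bounds via the anatomy of integers that requires the graph-compression machinery. I would remark, finally, that for the eventual application in Theorem \ref{main} one only needs the conclusion for $\Phi$ of the special shape \eqref{PhiDef}; for such $\Phi$ one can often get away with the classical extra-divergence results instead, but for the statement exactly as phrased the compression argument seems unavoidable.
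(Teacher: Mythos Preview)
The paper does not prove this statement. It is presented as Conjecture~\ref{DSconj}, and the text immediately afterwards says explicitly that the Duffin--Schaeffer conjecture ``remains open''. The paper's strategy for Theorem~\ref{main} is precisely to \emph{avoid} needing the full conjecture: it invokes only the classical Duffin--Schaeffer \emph{theorem} (Theorem~\ref{DSthm}), which carries the extra hypothesis~\eqref{additional}, and the bulk of \S\S\ref{Bohr}--\ref{final} is devoted to constructing a modified approximating function $\Psi$ (supported on the set $G$) for which~\eqref{additional} can be verified via the Bohr-set structure and the estimates~\eqref{order}.

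What you have written is, at a high level, the Koukoulopoulos--Maynard argument, which postdates this paper and does settle the conjecture in full. As an outline it is broadly correct: Gallagher's zero--one law, the second-moment reduction to an overlap bound, the Pollington--Vaughan overlap formula, and then the GCD-graph compression against the anatomy-of-integers input. But this is genuinely a different route from anything in the paper, and it is not a proof the paper supplies or claims. If your aim is to compare with the paper's own proof of this statement, there is nothing to compare: the paper offers none.

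One small correction to your closing remark. For the application to Theorem~\ref{main} the paper does not fall back on ``extra-divergence'' results; it applies Theorem~\ref{DSthm} directly to $\Psi = \Psi_{\balp}^{\bgam}$ (not to the raw $\Phi$ of~\eqref{PhiDef}), and the point of restricting the support to $G$ and of the structural Lemmas~\ref{inner}--\ref{averagegood} is exactly to make~\eqref{additional} checkable. So even granting your sketch of the full conjecture, it would be overkill for the paper's purposes.
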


\noindent For comparison to Khintchine's theorem, note that if $\Phi$ is monotonic then the divergence of $\sum_{n=1}^\infty \frac{\varphi(n)}n \Phi(n)$ is equivalent to that of $\sum_{n=1}^\infty \Phi(n)$.

The Duffin--Schaeffer conjecture has stimulated research in diophantine approximation for decades, and remains open. There has been some progress, including the Erd\H{o}s--Vaaler theorem \cite[Theorem 2.6]{Har1998}, as well as \cite{Aist2014, BHHV2013, HPV2012} and, most recently, \cite{Fufu}. For our purpose, the most relevant partial result is the Duffin--Schaeffer theorem \cite[Theorem 2.5]{Har1998}.

\begin{thm}[Duffin--Schaeffer theorem] \label{DSthm}
Conjecture \ref{DSconj} holds under the additional hypothesis 
\begin{equation} \label{additional}
\limsup_{N \to \infty} \Bigl( \sum_{n \le N} \frac{\varphi(n)}n \Phi(n) \Bigr) \Bigl( \sum_{n \le N} \Phi(n) \Bigr)^{-1} > 0.
\end{equation}
\end{thm}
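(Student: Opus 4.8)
The plan is to recast Conjecture~\ref{DSconj} as the assertion that a certain limsup set has full Lebesgue measure, and then to run a second-moment (quasi-independence) argument in which the error term produced by the overlap estimate is absorbed precisely by hypothesis~\eqref{additional}.

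First I would fix the geometry. One may assume $\Phi(n) \le 1/2$ for every $n$, the general case reducing to this by a routine manoeuvre. For $n \in \bN$ set
\[
\cA_n = \bigcup_{\substack{0 \le r < n \\ \gcd(r,n) = 1}} \Bigl( \frac rn - \frac{\Phi(n)}{n}, \ \frac rn + \frac{\Phi(n)}{n} \Bigr) \subseteq \bR/\bZ,
\]
so that $\bet \in \cA_n$ exactly when $|n\bet - r| < \Phi(n)$ for some $r$ coprime to $n$. These $\varphi(n)$ intervals are pairwise disjoint, whence $\mu(\cA_n) = 2\varphi(n)\Phi(n)/n$, and hypothesis~\eqref{DShyp} reads $\sum_n \mu(\cA_n) = \infty$. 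The conclusion of Conjecture~\ref{DSconj} is precisely that $\mu(\limsup_n \cA_n) = 1$. The engine is the standard divergence Borel--Cantelli lemma with quasi-independence on average (see e.g.\ \cite{Har1998, BRV2016}): if $\sum_n \mu(\cA_n) = \infty$ and there is a sequence $N_j \to \infty$ along which
\[
\sum_{m,n \le N_j} \mu(\cA_m \cap \cA_n) \ \le\ \Bigl( \sum_{n \le N_j} \mu(\cA_n) \Bigr)^{2} + O\Bigl( \sum_{n \le N_j} \mu(\cA_n) \Bigr),
\]
then $\mu(\limsup_n \cA_n) = 1$. (With only the cruder bound $\sum_{m,n\le N_j}\mu(\cA_m\cap\cA_n) \ll (\sum_{n\le N_j}\mu(\cA_n))^2$ one still obtains positive measure from the Chung--Erd\H{o}s inequality, which a zero--one law of Gallagher type then promotes to full measure.) Everything thus reduces to an overlap estimate.

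The crux --- and the step I expect to be the main obstacle --- is the following. For $m \ne n$, writing $d = \gcd(m,n)$, $m = dm'$, $n = dn'$, I would establish
\[
\mu(\cA_m \cap \cA_n) \ \le\ \mu(\cA_m)\mu(\cA_n) + \cE(m,n), \qquad \sum_{\substack{m, n \le N \\ m \ne n}} \cE(m,n) \ \ll\ \sum_{n \le N} \Phi(n).
\]
The mechanism: an interval of $\cA_m$ about $a/m$ meets one of $\cA_n$ about $b/n$ only when $|an' - bm'| < n'\Phi(m) + m'\Phi(n)$, and the overlap then has measure at most $2\min(\Phi(m)/m, \Phi(n)/n)$; since $\gcd(m',n') = 1$, counting the admissible reduced pairs $(a,b)$ separates into a principal part, which reassembles into $\mu(\cA_m)\mu(\cA_n)$, plus a genuinely smaller remainder. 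Summing the remainder over $m,n \le N$ --- dyadically in the common factor $d$, and exploiting the two coprimality conditions to bound the count --- should leave a total that is \emph{linear} in the Khintchine-type sum $\sum_n \Phi(n)$. Getting the error down to this linear size, rather than something quadratic, while faithfully tracking greatest common divisors and reduced fractions, is the technical heart; it is the classical core of the Duffin--Schaeffer theorem (cf.\ \cite{DS1941, Har1998}).

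Finally I would invoke hypothesis~\eqref{additional}. It supplies a constant $c > 0$ and a sequence $N_j \to \infty$ with $\sum_{n \le N_j}\Phi(n) \le c^{-1}\sum_{n \le N_j}\tfrac{\varphi(n)}{n} \Phi(n) = \tfrac{1}{2c}\sum_{n \le N_j}\mu(\cA_n)$. Hence along $N_j$ the remainder obeys $\sum_{m \ne n \le N_j}\cE(m,n) \ll \sum_{n \le N_j}\mu(\cA_n)$; adding the diagonal contribution $\sum_{n \le N_j}\mu(\cA_n)$ and the off-diagonal principal part $\le (\sum_{n \le N_j}\mu(\cA_n))^{2}$ produces exactly the inequality demanded by the divergence Borel--Cantelli lemma. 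Since $\sum_{n \le N_j}\mu(\cA_n) = 2\sum_{n \le N_j}\tfrac{\varphi(n)}{n}\Phi(n) \to \infty$ by~\eqref{DShyp}, we conclude $\mu(\limsup_n \cA_n) = 1$, which is Conjecture~\ref{DSconj} under hypothesis~\eqref{additional}, i.e.\ Theorem~\ref{DSthm}.
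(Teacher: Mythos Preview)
The paper does not prove Theorem~\ref{DSthm}; it is quoted as a known result with a citation to \cite[Theorem 2.5]{Har1998}, and then invoked as a black box in \S\ref{final}. So there is no ``paper's own proof'' to compare against.

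That said, your sketch is a faithful outline of the classical argument (originating in \cite{DS1941} and exposited in \cite{Har1998}): reformulate as a limsup set of unions of reduced-fraction intervals, control pairwise overlaps $\mu(\cA_m\cap\cA_n)$ by a principal term $\mu(\cA_m)\mu(\cA_n)$ plus a remainder summing to $O(\sum_{n\le N}\Phi(n))$, use hypothesis~\eqref{additional} to absorb that remainder into $O(\sum_{n\le N}\mu(\cA_n))$ along a subsequence, and conclude via the divergence Borel--Cantelli/Chung--Erd\H{o}s machinery together with Gallagher's zero--one law \cite{Gal1961}. One small caution: the sharper form of the quasi-independence lemma you state --- with main term exactly $(\sum\mu(\cA_n))^2$ rather than $\ll(\sum\mu(\cA_n))^2$ --- is not needed and is not what the classical overlap estimate actually delivers; the standard route is the cruder bound followed by the zero--one law, which you mention parenthetically. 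If you were writing this up in full you would want to make that the main line of argument rather than an aside.
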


\noindent Here $\varphi$ is the Euler totient function, given by $\varphi(n) = \displaystyle \sum_{\substack{a \le n \\ (a,n) = 1}} 1$. 

If $\Phi$ were supported on primes, for instance, then the hypothesis \eqref{additional} would present no difficulties \cite[p. 27]{Har1998}, but in general this hypothesis is quite unwieldy. There have been very few genuinely different examples in which the Duffin--Schaeffer theorem has been applied but, as demonstrated in \cite{Cho2018}, approximating functions of the shape $\Phi_\balp^\gam$ are susceptible to this style of attack.

We tame our auxiliary approximating function $\Phi$ by restricting its support to a `well-behaved' set $G$, giving rise to a modified auxiliary approximating function $\Psi = \Psi_\balp^\bgam$ (see \S\S \ref{fractional} and \ref{final}). The Duffin--Schaeffer theorem will be applied to $\Psi$. By partial summation and the monotonicity of $\Psi$, we are led to estimate the sums
\begin{equation} \label{T}
T_N(\balp, \bgam) := \sum_{\substack{n \le N \\ n \in G}} \frac1{\| n \alp_1 - \gam_1\| \cdots \| n \alp_{k-1} - \gam_{k-1} \|}
\end{equation}
and
\begin{equation} \label{Tstar}
T_N^*(\balp, \bgam) := \sum_{\substack{n \le N \\ n \in G}} \frac{\varphi(n)}{n \| n \alp_1 - \gam_1 \| \cdots \| n \alp_{k-1} - \gam_{k-1} \|}.
\end{equation}

Specifically, we require sharp upper bounds for the first sum and sharp lower bounds for the second. By dyadic pigeonholing, the former boils down to estimating the cardinality of \emph{Bohr sets}
\begin{equation} \label{BohrDef}
B = B_\balp^\bgam (N; \bdel) := \{ n \in \bZ: |n| \le N, \| n \alp_i - \gam_i \| \le \del_i \quad (1 \le i \le k-1) \}.
\end{equation}
The latter, meanwhile, demands that we also understand the structure of $B$; we will be allowed to impose a size restriction on the $\del_i$ to make this work. 

Bohr sets have been studied in other parts of mathematics, notably in additive combinatorics \cite[\S 4.4]{TV2006}. The idea is that there should be generalised arithmetic progressions $P$ and $P'$, of comparable size, for which $P \subseteq B \subseteq P'$. This correspondence is well-understood in the context of abelian groups, but for diophantine approximation the foundations are still being laid. In \cite{Cho2018}, the first author constructed $P$ in the case $k=2$ case using continued fractions, drawing inspiration from Tao's blog post \cite{TaoPost}. Lacking such a theory in higher dimensions, we will use reduced successive minima in this article, and the theory of exponents of diophantine approximation will be used to handle the inhomogeneity. We shall also construct the homogeneous counterpart of $P'$, in order to estimate the cardinality of $B$. 

The basic idea is to lift $B$ to a set $\tilde B \subset \bZ^k$. To determine the structure of $\tilde B$, we procure a discrete analogue of John's theorem, akin to that of Tao and Vu \cite[Theorem 1.6]{TV2008}. The structural data provided in \cite{TV2008} are insufficient for our purposes, as they only assert the upper bound $\dim(P) \le k$. By exercising some control over the parameters, which we may for the problem at hand, we show not only that $\dim(P) = k$, but also that each dimension has substantial length. In addition, we extend to the inhomogeneous case.

As in \cite{Cho2018}, the totient function does average well: we show that $\frac{\varphi(n)}n \gg 1$ on average over our generalised arithmetic progressions. This will eventually enable us to conclude that 
\[
T_N^*(\balp, \bgam) \asymp T_N(\balp, \bgam),
\]
and to then complete the proof of Theorem \ref{main} using the Duffin--Schaeffer theorem.

\subsection{Open problems}

\subsubsection{The large multiplicative exponent case}

It is plausible that Theorem \ref{main} might hold without the assumption $\omega^\times(\balp) < \frac{k-1}{k-2}$; as discussed in the introduction, some assumption is necessary (irrationality, for example). This aspect has not been solved even in the case $k=2$, see \cite{Cho2018}. When $k=2$, the hypothesis $\omega^\times(\balp) < \frac{k-1}{k-2}$ is equivalent to $\alp_1$ being irrational and non-Liouville and, whilst the former is necessary, the latter is likely not.

\subsubsection{The convergence theory}

It would be desirable to have a closer convergence counterpart to Theorem \ref{main}, in the spirit of \cite[Corollary 2.1]{BHV2016}. A homogeneous convergence statement would follow from an upper bound of the shape
\[
\sum_{n \le N} \frac1{ \| n \alp_1 \| \cdots \| n \alp_{k-1}\| } \ll N(\log N)^{k-1}
\]
for the sums considered in \cite{Bug2009, Fre2018, LV2015}, together with an application of the Borel--Cantelli lemma. This bound is generically false \cite[\S 1.2.4]{BHV2016} in the case $k=2$, and when $k \ge 3$ is considered to be difficult to obtain even for a single vector $(\alp_1, \ldots, \alp_{k-1})$; see the question surrounding \cite[Equation (1.4)]{LV2015}. It is likely that the logarithmically-averaged sums
\[
\sum_{n \le N} \frac1{ n \| n \alp_1 \| \cdots \| n \alp_{k-1} \|}
\]
are better-behaved. Perhaps the order of magnitude is generically $(\log N)^k$, as is known when $k=2$ (see \cite[\S 1.2.4]{BHV2016}).

\subsubsection{A special case of the Duffin--Schaeffer conjecture}

In the course of our proof of Theorem \ref{main}, we establish the Duffin--Schaeffer conjecture for a class of functions, namely those modified auxiliary approximating functions of the shape $\Psi = \Psi_\balp^\bgam$. The task of proving the Duffin--Schaeffer conjecture for the unmodified functions $\Phi = \Phi_\balp^\bgam$, however, remains largely open, even in the simplest case $k=2$.

\subsubsection{Inhomogeneous Duffin--Schaeffer problems} 

Inhomogeneous variants of the Duffin--Schaeffer conjecture have received some attention in recent years \cite{BHV2016, Cho2018, Ram2016, Ram2017, Yu2018}. If we knew an inhomogeneous version of the Duffin--Schaeffer theorem, then the following assertion would follow from our method.

\begin{conj} \label{cond} Let $k \ge 2$, let $\alp_1, \ldots, \alp_{k-1}, \gam_1, \ldots, \gam_k \in \bR$, and assume that the multiplicative exponent of $\balp = (\alp_1, \ldots, \alp_{k-1})$ satisfies $\ome^\times(\balp) < \frac{k-1}{k-2}$. Let $\psi: \bN \to \bR_{\ge 0}$ be a decreasing function satisfying \eqref{divergence}. Then for almost all $\alp \in \bR$ there exist infinitely many $n \in \bN$ such that
\[
\| n \alp_1 - \gam_1 \| \cdots \| n \alp_{k-1} - \gam_{k-1} \| \cdot  \| n \alp - \gam_k\| < \psi(n).
\]
\end{conj}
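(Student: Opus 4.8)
The strategy is to re-run the proof of Theorem \ref{main} essentially verbatim, the sole new ingredient being an inhomogeneous substitute for the Duffin--Schaeffer theorem (Theorem \ref{DSthm}). Fix $\balp = (\alp_1,\ldots,\alp_{k-1})$, $\bgam = (\gam_1,\ldots,\gam_{k-1})$ and $\gam_k$, and let $\Phi = \Phi_\balp^\bgam$ be the auxiliary approximating function of \eqref{PhiDef}. Exactly as in the homogeneous case, the conclusion of Conjecture \ref{cond} is equivalent to the assertion that for almost all $\alp \in \bR$ there are infinitely many $(n,r) \in \bN \times \bZ$ with $|n\alp - \gam_k - r| < \Phi(n)$. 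Restricting the support of $\Phi$ to the well-behaved set $G$ produces the modified function $\Psi = \Psi_\balp^\bgam$ of \S\S\ref{fractional}--\ref{final}. The crucial point is that $G$, $\Psi$, and every quantity estimated en route depend only on $(\balp,\bgam)$ and not on $\gam_k$: the Bohr sets $B_\balp^\bgam(N;\bdel)$ and their generalised-arithmetic-progression structure, the cardinality bound feeding into the estimate $T_N(\balp,\bgam) \ll N(\log N)^{k-1}$, the averaging estimate $\varphi(n)/n \gg 1$ over our progressions that yields $T_N^*(\balp,\bgam) \asymp T_N(\balp,\bgam)$, and hence both the divergence of $\sum_n \frac{\varphi(n)}{n}\Psi(n)$ and the verification of the Duffin--Schaeffer hypothesis \eqref{additional} for $\Psi$, all carry over with no change. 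The inhomogeneity coming from the shifts $\gam_i$ on the $\alp_i$ is already absorbed into the proof of Theorem \ref{main} through the transference inequalities advertised in the introduction, and requires nothing further here.

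The only place in the whole argument that uses the homogeneity of the target quantity $\|n\alp\|$ is the concluding appeal to Theorem \ref{DSthm}. To finish, one instead invokes an inhomogeneous Duffin--Schaeffer theorem of the following shape: if $\Phi \colon \bN \to \bR_{\ge 0}$ satisfies $\sum_n \frac{\varphi(n)}{n}\Phi(n) = \infty$ together with \eqref{additional}, then for every $\gam \in \bR$ and almost every $\bet \in \bR$ the inequality $|n\bet - \gam - r| < \Phi(n)$ has infinitely many solutions $(n,r) \in \bN \times \bZ$ subject to the appropriate primitivity condition. Applying this with $\Phi = \Psi_\balp^\bgam$ and $\gam = \gam_k$ produces, for almost all $\alp$, infinitely many $n \in G$ with $\|n\alp - \gam_k\| < \Psi(n) \le \Phi(n)$, which on unwinding \eqref{PhiDef} is precisely the conclusion of Conjecture \ref{cond}. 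An inhomogeneous analogue of \cite{Fufu} would likewise suffice, and would even allow one to dispense with the verification of \eqref{additional}; one would, however, still need the lower bound for $T_N^*$ in order to secure the divergence hypothesis.

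The main --- and essentially the only --- obstacle is the inhomogeneous Duffin--Schaeffer theorem itself. Already its correct formulation calls for care: the coprimality constraint $(n,r)=1$ responsible for the delicacy of Conjecture \ref{DSconj} has no transparent counterpart when $\gam \ne 0$, and one must isolate the primitivity or reduction hypothesis that renders the statement simultaneously true and strong enough to be used here. Proving such a theorem under \eqref{additional} should in principle follow the Erd\H{o}s--Vaaler--Harman line --- a Borel--Cantelli argument in which quasi-independence of the events $\{\bet \in [0,1] : |n\bet - \gam - r| < \Phi(n)\}$ is controlled through estimates for their pairwise overlaps --- but adapting those overlap estimates to accommodate the shift $\gam$ is exactly the technical obstruction that remains open, and is what keeps Conjecture \ref{cond} from being a theorem. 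Granted such an input, the deduction above is routine, precisely because every quantitative estimate in the present paper is insensitive to $\gam_k$.
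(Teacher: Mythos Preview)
Your analysis matches the paper's own treatment exactly: this statement is presented in the paper as a \emph{conjecture}, not a theorem, with the sole remark that it ``would follow from our method'' given an inhomogeneous Duffin--Schaeffer theorem (the paper then records that conjectural input as \cite[Conjecture 1.7]{Cho2018}). You have correctly identified that every estimate in \S\S\ref{Bohr}--\ref{final} is insensitive to $\gam_k$, that the only point of contact with homogeneity is the final appeal to Theorem \ref{DSthm}, and that the missing ingredient --- an inhomogeneous analogue of the Duffin--Schaeffer theorem --- is precisely what keeps the statement conjectural; there is nothing further in the paper to compare against.
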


It would follow, for instance, if we knew the following \cite[Conjecture 1.7]{Cho2018}.

\begin{conj} [Inhomogeneous Duffin--Schaeffer theorem] 
Let $\del \in \bR$, and let $\Phi: \bN \to \bR_{\ge 0}$ satisfy \eqref{DShyp} and \eqref{additional}. Then for almost all $\bet \in \bR$ there exist infinitely many $n \in \bN$ such that
\[
\| n \bet - \del \| < \Phi(n).
\]
\end{conj}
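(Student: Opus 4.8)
The plan is to run the proof of the Duffin--Schaeffer theorem (Theorem \ref{DSthm}) in the inhomogeneous setting, replacing the homogeneous overlap estimates of Pollington--Vaughan with inhomogeneous analogues. Truncating $\Phi$ so that $\Phi(n) \le \tfrac12$ for all $n$ (which alters neither \eqref{DShyp} nor \eqref{additional} nor the conclusion), put
\[
A_n = A_n(\del) := \{ \bet \in [0,1) : \| n \bet - \del \| < \Phi(n) \},
\]
a union of $n$ arcs of total measure $2 \Phi(n)$. Since $\varphi(n)/n \le 1$, hypothesis \eqref{DShyp} already forces $\sum_n |A_n| = \infty$. By a zero--one law for inhomogeneous approximation, provable by Gallagher's density argument, it suffices to show $|\limsup_n A_n| > 0$. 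The Chung--Erd\H{o}s inequality reduces this to the quasi-independence bound
\[
\sum_{m, n \le N} |A_m \cap A_n| \ll \Bigl( \sum_{n \le N} |A_n| \Bigr)^2
\]
along some sequence $N \to \infty$; hypothesis \eqref{additional} is used precisely to produce such a sequence, on which $\sum_{n \le N} \tfrac{\varphi(n)}{n} \Phi(n) \asymp \sum_{n \le N} \Phi(n)$, so that it is harmless to replace $\sum_{n\le N}|A_n|$ by $\sum_{n\le N}\tfrac{\varphi(n)}{n}\Phi(n)$ on the right.

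The crux is the overlap estimate. For $m \ne n$ with $d = \gcd(m,n)$, the arc of $A_m$ with numerator $r$ meets the arc of $A_n$ with numerator $s$ exactly when $\| nr - ms + \del(n - m) \| < n\Phi(m) + m\Phi(n) =: W$; as $(r,s)$ runs over $(\bZ/m\bZ) \times (\bZ/n\bZ)$, the quantity $nr - ms$ runs over $d\bZ/mn\bZ$ with each value attained by exactly $d$ pairs, and one is led to a bound of the shape
\[
|A_m \cap A_n| \ll \Phi(m) \Phi(n) + d \min\Bigl( \tfrac{\Phi(m)}{m}, \tfrac{\Phi(n)}{n} \Bigr) \cdot \mathbf{1}\bigl[ \, \| \del(n - m)/d \| < W/d \, \bigr],
\]
where the indicator term is only ever relevant when $2W < d$, i.e. for pairs with large $\gcd$ and small $\Phi$. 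The first term sums to $\ll (\sum_n \Phi(n))^2$ directly. For the second, I would stratify the pairs $(m,n)$ according to the value of $d$ and the set of primes dividing both $m$ and $n$, mimicking the Pollington--Vaughan analysis underlying Theorem \ref{DSthm}: bound the number of pairs in each stratum, use that a pair contributes only when a multiple of $d$ falls into a short window of length $2W$, and check that \eqref{DShyp} together with \eqref{additional} absorb every stratum, exactly as in the homogeneous case.

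I expect the main obstacle to be precisely the dependence of the overlap estimate on the arbitrary shift $\del$ through the indicator above: the harmful pairs are those for which $\del(n-m)$ is anomalously close to $d^{-1}\bZ$, and bounding their aggregate contribution uniformly over every $\del \in \bR$ — including rational $\del$, where $\del h \bmod 1$ takes few values, and Liouville $\del$, where $\|\del h\|$ can be persistently tiny — appears to be the genuine difficulty, and is presumably why the statement remains conjectural. One line of attack is to replace the indicator by its worst case, so removing the $\del$-dependence altogether, and to verify that \eqref{additional} is still enough; a more delicate route would be to average over an auxiliary family of shifts so as to reduce to a homogeneous overlap count with an acceptable loss. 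Failing these, one could impose a mild diophantine condition on $\del$ (non-Liouville, say), although it is the full generality — arbitrary $\del$ and arbitrary $\Phi$ obeying \eqref{DShyp} and \eqref{additional} — that makes the conjecture delicate. Once such a $\del$-uniform overlap bound is in hand, the remainder is routine Borel--Cantelli bookkeeping together with the zero--one law.
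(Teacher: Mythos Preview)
The statement you are attempting is not a theorem in the paper but a \emph{conjecture}: it is explicitly labelled as such and listed among the open problems in \S1.3. There is therefore no proof in the paper to compare your attempt against. The paper only states that Conjecture~\ref{cond} would follow \emph{if} this inhomogeneous Duffin--Schaeffer theorem were known.

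Your proposal is a reasonable outline of the natural strategy, and you yourself correctly identify the genuine obstruction: the $\del$-dependent indicator in the overlap bound, whose contribution must be controlled uniformly over all shifts $\del$. That is indeed the missing idea, and nothing in your sketch resolves it; the two fallback suggestions (replace the indicator by its worst case, or average over auxiliary shifts) are precisely the moves that have not been made to work in general, and imposing a diophantine condition on $\del$ would not yield the conjecture as stated. There is, however, a further gap earlier in your argument that you treat as routine but is not: you invoke ``a zero--one law for inhomogeneous approximation, provable by Gallagher's density argument'' to upgrade positive measure to full measure. The paper explicitly notes (end of \S1.3.4) that in the inhomogeneous setting \emph{we do not at present even have an analogue of Gallagher's zero--full law}. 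So this step, which you pass over in a clause, is itself an open problem, and your plan rests on two unresolved ingredients rather than one.
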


\noindent There is little consensus over what the `right' statement of the inhomogeneous Duffin--Schaeffer theorem should be. The assumption \eqref{additional} may not ultimately be necessary, just as it is conjecturally not needed when $\del = 0$. In the inhomogeneous setting, we do not at present even have an analogue of Gallagher's zero-full law \cite{Gal1961}.

\subsubsection{The dual problem} We hope to address this in future work. 

\begin{conj} Let $\alp_1, \ldots, \alp_{k-1} \in \bR$. For $n \in \bZ$ write $n^+ = \max(|n|, 2)$, and define
\begin{align*}
\psi: \bZ_{\ge 2} &\to \bR_{\ge 0} \\
n &\mapsto n^{-1} (\log n)^{-k}.
\end{align*}
Then for almost all $\alp_k \in \bR$ there exist infinitely many $(n_1,\ldots,n_k) \in \bZ^k$ such that
\begin{equation} \label{dual}
 \| n_1 \alp_1 + \cdots + n_k \alp_k \| < \psi(n_1^+ \cdots n_k^+).
\end{equation}
\end{conj}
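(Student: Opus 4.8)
The plan is to imitate the fibred argument behind Theorem \ref{main}, working in the variable $\alp_k$, with the primal Bohr sets \eqref{BohrDef} replaced by dual analogues living in $\bZ^{k-1}$. First I would peel off the terms with $n_k=0$: if $\| n_1\alp_1 + \cdots + n_{k-1}\alp_{k-1}\| < \psi(2 n_1^+\cdots n_{k-1}^+)$ has infinitely many solutions $\bn' = (n_1,\ldots,n_{k-1}) \in \bZ^{k-1}$ --- a property of $\balp = (\alp_1,\ldots,\alp_{k-1})$ alone, which holds in particular whenever $\balp \in \bQ^{k-1}$ --- then \eqref{dual} holds for every $\alp_k$ and there is nothing to prove. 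So we may assume this fails; then $\balp \notin \bQ^{k-1}$, so the set $\Gam := \{ \textstyle\sum_{i<k} n_i\alp_i \bmod 1 : \bn' \in \bZ^{k-1}\}$ is dense in $\bR/\bZ$, and the monotonicity of $\psi$ furnishes good lower bounds for $\| \sum_{i<k} n_i\alp_i\|$ in terms of $n_1^+\cdots n_{k-1}^+$. Using $\bn \mapsto -\bn$ and periodicity, \eqref{dual} then becomes: for almost all $\alp_k \in [0,1]$ there are infinitely many $(\bn', m) \in \bZ^{k-1}\times\bN$ with
\[
\| m\alp_k + \gam(\bn') \| < \psi(n_1^+\cdots n_{k-1}^+ \cdot m), \qquad \gam(\bn') := \textstyle\sum_{i<k} n_i\alp_i ,
\]
i.e. $\alp_k$ lies within $\psi(n_1^+\cdots n_{k-1}^+ m)/m$ of $(p - \gam(\bn'))/m$ for some $p \in \bZ$. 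This is an inhomogeneous approximation of $\alp_k$ by the shifted lattices $\tfrac1m(\Gam + \bZ)$, with shifts depending on $\bn'$.

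Next, mirroring \S\ref{techniques}, I would package the $\bn'$-sum into an auxiliary approximating function of $m$. Decomposing dyadically into ranges $|n_i| \asymp 2^{e_i}$ and $m \asymp 2^f$, the basic counting problem becomes that of the \emph{dual Bohr set}
\[
\tilde B = \tilde B_\balp(\mathbf{e}; \del; \tet) := \{ \bn' \in \bZ^{k-1} : |n_i| \le 2^{e_i}\ (i<k),\ \| \gam(\bn') - \tet \| \le \del \} ,
\]
together with the number of its elements meeting a suitable coprimality constraint relating $m$ to the $n_i$ --- the dual substitute for the weight $\varphi(n)/n$, whose natural average $1/\zeta(k) \gg 1$ plays the role of the claim that $\varphi(n)/n \gg 1$ on average. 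Restricting $(\bn', m)$ to a well-behaved set $G$ (controlling the dyadic ranges, the gcd of $m$ with the data, and the spacing of the points $(p-\gam(\bn'))/m$) yields a modified approximating function $\Psi = \Psi_\balp$ of $\alp_k$ which is essentially monotonic, so that the Duffin--Schaeffer theorem (Theorem \ref{DSthm}) can be applied. Partial summation reduces matters to dual analogues of $T_N(\balp,\bgam)$ and $T_N^*(\balp,\bgam)$: a sharp upper bound for the first, and a matching coprimality-weighted lower bound for the second. The underlying series diverges at the slow rate $\log\log N$ --- this is precisely the order of $\sum_{\bn,m} \meas\{ \alp_k \in [0,1] : \| m\alp_k + \gam(\bn')\| < \psi(n_1^+\cdots n_{k-1}^+ m)\}$, and is where the choice $\psi(n) = n^{-1}(\log n)^{-k}$ is sharp.

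I expect the obstruction to be twofold, both parts structural. First, the dual Bohr set $\tilde B$ sits in $\bZ^{k-1}$ and, once one sums over the dyadic ranges, is constrained by the \emph{hyperbolic} region $n_1^+\cdots n_{k-1}^+ \le N$ rather than a convex body; the reduced-successive-minima approach of this paper does not transfer verbatim, and one must either reduce within each dyadic box to honest generalised arithmetic progressions and resum, or build a geometry-of-numbers theory adapted to the hyperbolic region. Second --- and this I expect to be the crux --- the approximating points $(p-\gam(\bn'))/m$ are not rationals, so the classical Duffin--Schaeffer apparatus (coprimality of numerator and denominator, Erd\H{o}s-type overlap estimates) is not directly available: one needs a Duffin--Schaeffer-type quasi-independence estimate for $\limsup$ sets defined by approximation to the fixed countable set $\{(p-\gam(\bn'))/m\}$ with multiplicative weights, equivalently a proof that a positive proportion of the mass of the dual $T_N^*$ is carried by primitive vectors $\bn$. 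As in the primal case this should hinge on quantitative equidistribution of $\{ \gam(\bn') \bmod 1 : n_1^+\cdots n_{k-1}^+ \le H\}$, which one would again enforce via an exponent hypothesis on $\balp$; granting it, the remainder of the argument should follow the template of the proof of Theorem \ref{main}.
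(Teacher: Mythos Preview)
The statement you are attempting to prove is a \emph{conjecture} in the paper, listed under open problems (\S1.3.5, ``The dual problem''). The paper does not prove it; the authors write only that they ``hope to address this in future work'' and give a Borel--Cantelli heuristic for why the relevant $\limsup$ set should have full measure. There is therefore no proof in the paper to compare your proposal against.

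Your proposal is not a proof either, and you are candid about this: it is a research outline that correctly isolates the two structural obstacles. The first --- that the dual counting lives on the hyperbolic region $n_1^+ \cdots n_{k-1}^+ \le N$ rather than a box --- is real but probably tractable by the dyadic decomposition you suggest, at the cost of tracking an extra $(\log N)^{k-2}$ from the number of dyadic cells. The second obstacle is the serious one: the approximating points $(p - \gam(\bn'))/m$ are not rationals with coprime numerator and denominator, so the Duffin--Schaeffer theorem (Theorem \ref{DSthm}) does not apply as stated. You would need either an inhomogeneous Duffin--Schaeffer-type result for approximation by shifted lattices $\tfrac1m(\bZ - \gam(\bn'))$ with varying shifts, or a direct quasi-independence (overlap) estimate for the sets $E_{\bn',m}$. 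Neither is available in the paper, and indeed the authors flag the absence of an inhomogeneous Duffin--Schaeffer theorem as a separate open problem (\S1.3.4). Your outline is a reasonable programme, and the reduction to dual Bohr sets is the natural one, but the gap at the Duffin--Schaeffer step is genuine and is precisely why the statement remains a conjecture.
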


To motivate this, observe that the conditions $\alp_k \in [0,1]$ and \eqref{dual} define a limit superior set of unions of balls
\[
E_\bn = \bigcup_{a=0}^{n_k} B \Bigl( \frac{a - n_1 \alp_1 - \cdots - n_{k-1} \alp_{k-1}} {n_k}, \frac{ \psi( n_1 \cdots n_k) }{n_k} \Bigr) \cap [0,1].
\]
(Let us assume, for illustration, that $n_1, \ldots, n_k > 0$. This is a simplification of reality.) Using partial summation and the fact that 
\[
\sum_{n \le N} \sum_{n_1 \cdots n_k = n} 1 \asymp_k N (\log N)^{k-1},
\]
one can show that
\[
\sum_{\substack{n_1,\ldots,n_k \in \bN}} \mu(E_\bn) \gg \sum_{n=2}^\infty \frac1{n \log n} = \infty.
\]
In view of the Borel--Cantelli lemmas, we would expect on probabilistic grounds that $\displaystyle \limsup_{\bn \to \infty} E_\bn$ has full measure in $[0,1]$, and one can use periodicity to extend this reasoning to $\alp_k \in \bR$.

\subsection{Organisation}

In \S \ref{exponents}, we recall the relevant diophantine transference inequalities, in particular Khintchine transference and that of Bugeaud--Laurent. Then, in \S \ref{Bohr}, we develop the structural theory of Bohr sets, in this higher-dimensional diophantine approximation setting. This enables us to prove that the Euler totient function averages well on our Bohr sets, in \S \ref{average}, paving the way for us to show that the sums $T_N(\balp, \bgam)$ and $T_N^*(\balp, \bgam)$ are comparable, in \S \ref{fractional}. With all of the ingredients in place, we finish the proof of our main result, Theorem \ref{main}, in \S \ref{final}.

\subsection{Notation}

We use the Bachmann--Landau and Vinogradov notations: for functions $f$ and positive-valued functions $g$, we write $f \ll g$ or $f = O(g)$ if there exists a constant $C$ such that $|f(x)| \le C g(x)$ for all $x$. The constants implied by these notations are permitted to depend on $\alp_1, \ldots, \alp_{k-1}$. Further, we write $f \asymp g$ if $f \ll g \ll f$. If $S$ is a set, we denote the cardinality of $S$ by $|S|$ or $\# S$. The symbol $p$ is reserved for primes. The pronumeral $N$ denotes a positive integer, sufficiently large in terms of $\alp_1, \ldots, \alp_{k-1}$. When $x \in \bR$, we write $\| x \|$ for the distance from $x$ to the nearest integer.

\subsection{Funding and acknowledgments}

The authors were supported by EPSRC Programme Grant EP/J018260/1. SC was also supported by EPSRC Fellowship Grant EP/S00226X/1. We thank Victor Beresnevich, Lifan Guan, Mumtaz Hussain, Antoine Marnat and Terence Tao for beneficial conversations, and Antoine Marnat for introducing us to a broad spectrum of exponents of diophantine approximation. Most of all, SC thanks Victor Beresnevich and Sanju Velani for introducing him to the wonderful world of metric diophantine approximation.

\section{Diophantine exponents and transference inequalities} 
\label{exponents}

Beginning with \emph{Khintchine transference} \cite{BRV2016, Khi1926}, the relationship between simultaneous and dual approximation remains an active topic of research. Our focus will be on the inhomogeneous theory of Bugeaud and Laurent \cite{BL2005}, which builds upon foundational work of Mahler on dual lattices from the late 1930s (see \cite[Corollary 2.3]{Eve2018} and the surrounding discussion). For real vectors $\balp = (\alp_1, \ldots, \alp_d)$ and $\bgam = (\gam_1, \ldots, \gam_d)$, this provides a lower bound for the \emph{uniform simultaneous inhomogeneous exponent} $\hat \omega(\balp, \bgam)$ in terms of the \emph{dual exponent} $\omega^*(\balp)$. There have since been refinements and generalisations by a number of authors, among them Beresnevich--Velani \cite{BV2010}, Ghosh--Marnat \cite{GM2018}, and Chow--Ghosh--Guan--Marnat--Simmons \cite{CGGMS}.

We commence by introducing the \emph{simultaneous exponent} $\omega(\balp)$ of a vector $\balp = (\alp_1,\ldots, \alp_d) \in \bR^d$. This is the supremum of the set of real numbers $w$ such that, for infinitely many $n \in \bN$, we have
\[
\| n \alp_i \| < n^{-w} \qquad (1 \le i \le d).
\]
Comparing this to the multiplicative exponent $\ome^\times(\balp)$ defined in the introduction, it follows immediately from the definitions that
\[
d \ome(\balp) \le \ome^\times(\balp).
\]

For $\balp \in \bR^d$, define $\omega^*(\balp)$ as the supremum of the set of real numbers $w$ such that, for infinitely many $\bn = (n_1, \ldots, n_d) \in \bZ^d$, we have
\[
\| n_1 \alp_1 + \cdots + n_d \alp_d \| \le |\bn|^{-w}.
\]
For $\balp, \bgam \in \bR^d$, define $\hat \omega(\balp, \bgam)$ as the supremum of the set of real numbers $w$ such that, for any sufficiently large real number $X$, there exists $n \in \bN$ satisfying
\[
n < X, \qquad \| n \alp_i - \gam_i \| < X^{-w} \quad (1 \le i \le d).
\]
Below we quote a special case of the main theorem of \cite{BL2005}.

\begin{thm} [Bugeaud--Laurent] \label{BL} If $\balp, \bgam \in \bR^d$ then
\[
\hat \omega(\balp, \bgam) \ge \omega^*(\balp)^{-1}.
\]
\end{thm}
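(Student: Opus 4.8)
The plan is to prove the Bugeaud--Laurent bound $\hat\omega(\balp,\bgam) \ge \omega^*(\balp)^{-1}$ by the classical dual-lattice transference argument going back to Mahler. Write $v = \omega^*(\balp)$, and fix any $v' < v$; it suffices to show $\hat\omega(\balp,\bgam) \ge 1/v'$, i.e. that for every sufficiently large $X$ there is $n \in \bN$ with $n < X$ and $\|n\alp_i - \gam_i\| < X^{-1/v'}$ for all $i$. The starting point is that, by definition of $\omega^*$, there are infinitely many $\bn = (n_1,\dots,n_d) \in \bZ^d$ with $\|n_1\alp_1 + \cdots + n_d\alp_d\| \le |\bn|^{-v'}$; so the linear form $L(\bx) = \alp_1 x_1 + \cdots + \alp_d x_d$ is well-approximated by integers along a sparse but infinite sequence of integer vectors.

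The mechanism is a pigeonhole/transference step. Given a large parameter $Q$ (to be matched with $X$ at the end), consider the convex symmetric body in $\bR^{d+1}$ defined by $|x_0 - L(\bx)| \le \eta$ and $|x_i| \le Q$ for $1 \le i \le d$, where $\eta$ is a small quantity of size roughly $Q^{-d}$ chosen so that the body has volume $\asymp 1$. Minkowski's theorem (or the theory of successive minima, consistent with the paper's framing) produces a nonzero lattice point, and more relevantly one sets up the \emph{inhomogeneous} version: the point $\bgam$ must be translated into a fundamental domain of the dual object. Concretely, one uses the existence of good dual approximations $\bn$ with $\|L(\bn)\| \le |\bn|^{-v'}$ to build, for each large $X$, a residue-type argument: writing $q$ for the integer nearest $n_1\alp_1+\cdots+n_d\alp_d$, the vector $(\bn,q)$ is a short vector for the dual lattice, and a counting/averaging argument over $n < X$ shows the points $(n\alp_1-\gam_1,\dots,n\alp_d-\gam_d) \bmod \bZ^d$ cannot all avoid the tiny box of side $X^{-1/v'}$, because the obstruction to such avoidance is precisely controlled by the size of short dual vectors. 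This is the heart of Mahler's duality: simultaneous inhomogeneous approximation to height $X$ is obstructed only by dual vectors of length $\lesssim X$, and the hypothesis gives dual vectors that are \emph{too good} relative to their length, forcing the approximation to exist. One then optimises: if $\bn$ has $\|L(\bn)\| \le |\bn|^{-v'}$, set $X \asymp |\bn|^{?}$ so that the resulting exponent on $X$ in the box side comes out to $-1/v'$; tracking the exponents through the volume computation ($\eta \asymp Q^{-d}$, etc.) pins down the correct matching, and since there are infinitely many such $\bn$, one obtains the conclusion for arbitrarily large $X$, hence for all sufficiently large $X$ by a standard interpolation between consecutive $|\bn|$.

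The main obstacle is the inhomogeneity: in the homogeneous case ($\bgam = 0$) one simply invokes Mahler's theorem on polar/dual convex bodies and compound lattices, but with a shift $\bgam$ present one must instead run an inhomogeneous Minkowski or a transference theorem of Khintchine--Jarník type, and be careful that the translate $\bgam$ does not spoil the lattice-point count — the relevant statement is that a symmetric convex body of volume $\ge 2^{d+1}$ contains, in \emph{every} translate, a point of the lattice (inhomogeneous Minkowski), but this needs volume $\ge 4^{d+1}$ or an argument via the covering radius, and one must verify the body's volume is large enough after fixing $\eta$. Managing these constants and the precise exponent bookkeeping — ensuring the loss in passing from $v$ to $v'$ and in the box volume all wash out in the limit — is where the real care is needed; the conceptual content is entirely classical and this is exactly the special case of \cite{BL2005} that we are quoting, so in the paper we will simply cite \cite{BL2005} (and \cite{Eve2018} for the duality formalism) rather than reproduce this argument.
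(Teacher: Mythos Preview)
The paper gives no proof of this theorem: it is simply quoted as a special case of the main result of \cite{BL2005}, exactly as you yourself conclude at the end of your sketch. So on the level of what actually goes into the paper, you and the authors agree.

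That said, your informal sketch of the underlying mechanism has the transference running in the wrong direction, and it is worth flagging this. You fix $v' < v = \omega^*(\balp)$ and invoke the \emph{existence} of infinitely many $\bn \in \bZ^d$ with $\|n_1\alp_1 + \cdots + n_d\alp_d\| \le |\bn|^{-v'}$. But such $\bn$ are precisely the short dual vectors that \emph{obstruct} inhomogeneous approximation: a near-integer relation among the $\alp_i$ confines the orbit $(n\alp_1,\dots,n\alp_d) \bmod \bZ^d$ close to a hyperplane, potentially away from $\bgam$. You even say this yourself (``the obstruction to such avoidance is precisely controlled by the size of short dual vectors''), and then draw the opposite conclusion from it.

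The actual Bugeaud--Laurent argument proceeds in the other direction. One takes $v' > v$, so that $\|n_1\alp_1 + \cdots + n_d\alp_d\| > |\bn|^{-v'}$ for all sufficiently large $\bn$; this is a \emph{lower} bound on the first minimum of the relevant dual lattice, which via Mahler's inequalities for polar bodies translates into an \emph{upper} bound on the covering radius of the primal lattice, and hence the existence of $n < X$ with $\|n\alp_i - \gam_i\| < X^{-1/v'}$ for every large $X$. Letting $v' \to v^+$ gives $\hat\omega(\balp,\bgam) \ge 1/v$. Note also that with your quantifier ($v' < v$, so $1/v' > 1/v$) the stated reduction ``it suffices to show $\hat\omega(\balp,\bgam) \ge 1/v'$'' asks for strictly more than the theorem asserts --- another symptom of the reversed direction.
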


In the context of Theorem \ref{main}, we have $d = k -1$ and 
\[
\omega(\balp) \le \frac{\ome^\times(\balp)}d < \frac1{d-1}.
\]
Khintchine transference \cite[Theorem K]{BL2010} gives
\[
\frac{\omega^*(\balp) } {d + (d-1)\omega^*(\balp)} \le \omega(\balp)  < \frac1{d-1},
\]
and in particular
\begin{equation} \label{NotDualLiouville}
\omega^*(\balp) < \infty.
\end{equation}
Theorem \ref{BL} then furnishes a positive lower bound for $\hat \ome(\balp, \bgam)$, uniform in $\bgam$. In the sequel, let $\eps$ be a positive real number, sufficiently small in terms of $\alp_1, \ldots, \alp_{k-1}$.

\section[Bohr sets]{The structural theory of Bohr sets}
\label{Bohr}

In this section, we develop the correspondence between Bohr sets and generalised arithmetic progressions. In a different context, this is a fundamental paradigm of additive combinatorics \cite{TV2006}. For diophantine approximation, the first author used continued fractions to describe the theory in the case of rank one Bohr sets in \cite{Cho2018}. In the absence of a satisfactory higher-dimensional theory of continued fractions, we take a more general approach here, involving reduced successive minima. Our theory is inhomogeneous, which presents an additional difficulty. To handle this aspect, we deploy the theory of diophantine exponents, specifically Theorem \ref{BL} of Bugeaud--Laurent \cite{BL2005}.

Let $N$ be a large positive integer, and recall that we have fixed $\balp \in \bR^{k-1}$ with $\ome^\times(\balp) < \frac{k-1}{k-2}$. The shift vector $\bgam = (\gam_1, \ldots, \gam_{k-1})$ is also fixed, and for certain values of $\bdel = (\del_1, \ldots, \del_{k-1})$ we wish to study the structure of the Bohr set $B = B_\balp^\bgam (N; \bdel)$ defined in \eqref{BohrDef}. This rank $k-1$ Bohr set $B$ has the structure of a $k$-dimensional generalised arithmetic progression: we construct such patterns $P$ and $P'$ with a number of desirable properties, including that $P \subseteq B \subseteq P'$. For concreteness, we introduce the notations
\[
P(b; A_1, \ldots, A_k; N_1, \ldots, N_k) = \{ b + A_1 n_1 + \cdots + A_k n_k: |n_i| \le N_i \}
\]
and
\[
P^+(b; A_1, \ldots, A_k; N_1, \ldots, N_k) = \{ b + A_1 n_1 + \cdots + A_k n_k: 1 \le n_i \le N_i \},
\]
when $b, A_1, \ldots, A_k, N_1, \ldots, N_k \in \bN$. The latter generalised arithmetic progression is \emph{proper} if for each $n \in P^+(b, A_1, \ldots, A_k, N_1, \ldots, N_k)$ there is a \textbf{unique} vector $(n_1, \ldots, n_k) \in \bN^k$ for which
\[
n_i \le N_i \quad (1 \le i \le k), \qquad n = b + A_1 n_1 + \cdots + A_k n_k.
\]

Most of our structural analysis is based on the geometry of numbers in $\bR^k$. With
\[
\pi_1 : \bR^k \to \bR
\]
being projection onto the first coordinate, observe that $B = \pi_1(\tilde B)$, where
\[
\tilde B = \{ (n, a_1, \ldots, a_{k-1})  \in \bZ^k: |n| \le N, | n \alp_i - \gam_i - a_i| \le \del_i \quad (1 \le i \le k-1) \}.
\]
Meanwhile, our generalised arithmetic progressions will essentially be projections of suitably-truncated lattices. For $\bv_1, \ldots, \bv_k \in \bZ^k$ and $N_1,\ldots,N_k \in \bN$, define
\[
\tilde P (\bv_1, \ldots, \bv_k; N_1, \ldots, N_k) = \{ n_1 \bv_1 + \cdots + n_k \bv_k: |n_i| \le N_i \}.
\]
To orient the reader, we declare in advance that we will choose $A_i = |\pi_1(\bv_i)|$ for all $i$.

Our primary objective in this section is to prove the following lemma.

\begin{lemma} [Inner structure] \label{inner}  Assume
\[
N^{-\eps} \le \del_i \le 1 \quad (1 \le i \le k-1).
\]
Then there exists a proper generalised arithmetic progression 
\[
P = P^+(b; A_1, \ldots, A_k; N_1, \ldots, N_k) 
\]
contained in $B$, for which 
\[
|P| \gg \del_1 \cdots \del_{k-1} N, \qquad N_i \ge N^\eps \quad(1 \le i \le k), \qquad N^{\sqrt \eps} \le b \le \frac N{10}
\]
and
\begin{equation} \label{coprime}
\gcd(A_1,\ldots,A_k) = 1.
\end{equation}
\end{lemma}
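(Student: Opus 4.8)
\emph{Setup.} The plan is to pass to the lattice picture. Put $\bgam^\sharp=(0,\gam_1,\dots,\gam_{k-1})$, let $\Lambda\subset\bR^k$ be the unimodular lattice $\{(n,\,n\alp_1-a_1,\dots,n\alp_{k-1}-a_{k-1}):n,a_1,\dots,a_{k-1}\in\bZ\}$, and put $\cB=[-N,N]\times\prod_{i=1}^{k-1}[-\del_i,\del_i]$, so that $\tilde B=(\Lambda-\bgam^\sharp)\cap\cB$ and $B=\pi_1(\tilde B)$. Minkowski's second theorem supplies successive minima $\lam_1\le\dots\le\lam_k$ of $\cB$ with respect to $\Lambda$ (so $\lam_1\cdots\lam_k\asymp(N\del_1\cdots\del_{k-1})^{-1}$), and a standard reduction furnishes a $\bZ$-basis $\bv_1,\dots,\bv_k$ of $\Lambda$ with $\bv_i\in C_k\lam_i\cB$ for an absolute $C_k$. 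I then set $A_i:=\pi_1(\bv_i)$ — after replacing $\bv_i$ by $-\bv_i$ where needed to make $\pi_1(\bv_i)>0$, which is legitimate once $\pi_1(\bv_i)\ne0$ — and $N_i:=\lfloor c/\lam_i\rfloor$ for a small $c=c(\balp)>0$, so that $\prod_iN_i\asymp(\lam_1\cdots\lam_k)^{-1}\asymp\del_1\cdots\del_{k-1}N$. Since the $\bv_i$ form a basis of $\Lambda$ and $\pi_1(\Lambda)=\bZ$, we get $\gcd(A_1,\dots,A_k)=1$ for free.

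\emph{Bounding the minima (the main obstacle).} The crux is to show that every $\lam_i$ is small: $\lam_i\ll N^{-\kappa}$ for some $\kappa=\kappa(\balp)>0$. This yields $N_i\gg N^{\kappa}\ge N^{\eps}$ and also forces $\pi_1(\bv_i)\ne0$, since a nonzero $\bv\in\Lambda$ with $\pi_1(\bv)=0$ has the form $(0,a_1,\dots,a_{k-1})$ with some $a_j\ne0$, so $\bv\notin t\cB$ for any $t<1$ (as $\del_j\le1$), whereas $\bv_i\in C_k\lam_i\cB$ with $C_k\lam_i<1$ once $N$ is large. Here both the simultaneous exponent and the hypothesis $\ome^\times(\balp)<\tfrac{k-1}{k-2}$ are used essentially. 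From $\ome(\balp)\le\ome^\times(\balp)/(k-1)<\tfrac1{k-2}$ one gets $\lam_1\gg N^{-\sigma_1}$ for some $\sigma_1=\sigma_1(\balp)<\tfrac1{k-1}$: a smaller $\lam_1$ would produce a nonzero $n$ with $|n|\ll N^{1-\sigma_1}$ and $\max_i\|n\alp_i\|\le N^{-\sigma_1}$, contradicting $\ome(\balp)<\tfrac1{k-2}$. More delicately, $\lam_{k-1}\gg N^{-\sigma_2}$ for some $\sigma_2=\sigma_2(\balp)<\tfrac1{k-1}$: otherwise $\lam_{k-1}\cB$ would contain $k-1$ independent lattice points, giving $k-1$ integers $n$ with $|n|\ll N^{1-\sigma_2}$ and $\prod_i\|n\alp_i\|\ll\lam_{k-1}^{k-1}$, enough to force $\ome^\times(\balp)\ge\tfrac{k-1}{k-2}$. (Throughout, $N$ is large in terms of $\balp$, and one checks, using $\del_i\le1$ and the irrationality of the $\alp_i$, that the integers $n$ produced above exceed the relevant $\balp$-dependent thresholds.) Combining these with $\lam_i\ge\lam_1$ for $i\le k-2$, with $\lam_1\cdots\lam_k\asymp(N\del_1\cdots\del_{k-1})^{-1}$, and with $\del_i\ge N^{-\eps}$ gives $\lam_k\ll N^{(k-1)\eps+(k-2)\sigma_1+\sigma_2-1}$, which is $\ll N^{-\kappa}$ for some $\kappa=\kappa(\balp)>0$ once $\eps$ is small in terms of $\balp$, since $(k-2)\sigma_1+\sigma_2<1$ strictly.

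\emph{The base point.} Next I would locate an integer $n_0$ with $N^{\sqrt\eps}\le n_0\le N/10$ and $\|n_0\alp_i-\gam_i\|\le\del_i/10$ for every $i$; since $\del_i\ge N^{-\eps}$ it is enough that $\|n_0\alp_i-\gam_i\|\le N^{-\eps}/10$. By \eqref{NotDualLiouville} and Theorem~\ref{BL}, $\hat\ome(\balp,\bgam)\ge\ome^*(\balp)^{-1}=:2\tau>0$, so there is an integer $n_1\ll N^{\eps/\tau}\le N^{1/2}$ with $\|n_1\alp_i-\gam_i\|\le N^{-\eps}/20$. As $n_1$ may be too small, I translate it by a homogeneous best approximation: since $\ome(\balp)<\infty$, consecutive denominators $q_j$ of the best simultaneous approximations to $\balp$ satisfy $q_{j+1}\ll q_j^{c_\balp}$ for some $c_\balp=c_\balp(\balp)<\infty$, whence the largest such denominator $m$ below $N/20$ obeys $m\gg N^{1/c_\balp}\ge N^{\sqrt\eps}$ and $\|m\alp_i\|\ll N^{-1/(k-1)}\le N^{-\eps}/20$. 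Then $n_0:=n_1+m$ works, and I set $\bb:=(n_0,\,n_0\alp_1-\gam_1-a_1,\dots,n_0\alp_{k-1}-\gam_{k-1}-a_{k-1})\in\Lambda-\bgam^\sharp$, with $a_i$ the nearest integer to $n_0\alp_i-\gam_i$, so that $\bb\in\tfrac1{10}\cB$.

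\emph{Assembly.} Finally I form $P:=\pi_1\bigl(\bb+\{m_1\bv_1+\dots+m_k\bv_k:1\le m_i\le N_i\}\bigr)$. Choosing $c$ small enough that $\sum_iN_i\cdot C_k\lam_i<\tfrac18$, the inner set lies in $\tfrac18\cB$, so $\bb+\{\cdots\}\subseteq\tfrac1{10}\cB+\tfrac18\cB\subseteq\cB$; as it also lies in the coset $\Lambda-\bgam^\sharp$, it is contained in $\tilde B$, whence $P\subseteq\pi_1(\tilde B)=B$. Moreover $\pi_1(\bb+\sum_im_i\bv_i)=b+\sum_iA_im_i$ with $b:=n_0\in[N^{\sqrt\eps},N/10]$, and the representation is unique: if $b+\sum_im_iA_i=b+\sum_im_i'A_i$, then $\sum_i(m_i-m_i')\bv_i$ lies in $t\cB$ for some $t<1$ and is annihilated by $\pi_1$, hence vanishes by the remark in the second paragraph, forcing $m_i=m_i'$ for all $i$ by linear independence. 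Therefore $P=P^+(b;A_1,\dots,A_k;N_1,\dots,N_k)$ is proper, with $|P|=\prod_iN_i\gg\del_1\cdots\del_{k-1}N$, each $N_i\ge N^{\eps}$, $N^{\sqrt\eps}\le b\le N/10$, and $\gcd(A_1,\dots,A_k)=1$. The two places that require real work are the control of all successive minima in the second paragraph — where the full strength of $\ome^\times(\balp)<\tfrac{k-1}{k-2}$, not merely irrationality, is needed — and the construction of a base point in the prescribed window in the third paragraph.
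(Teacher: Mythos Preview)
Your overall plan matches the paper's: pass to the lattice picture, control the successive minima to get $N_i\ge N^\eps$ and $\gcd(A_i)=1$, manufacture a base point in the window $[N^{\sqrt\eps},N/10]$ via Bugeaud--Laurent transference plus a homogeneous shift, and then verify inclusion and properness. The cosmetic differences (working with $\Lambda\subset\bR^k$ and the box $\cB$ rather than $\bZ^k$ and a skew body, and using a best-approximation denominator for the homogeneous shift rather than Dirichlet directly) are harmless.

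There is, however, a genuine gap in your ``more delicate'' argument for $\lam_{k-1}\gg N^{-\sigma_2}$. Obtaining $k-1$ linearly independent lattice points in $\lam_{k-1}\cB$ gives you $k-1$ integers $n$ with $|n|\ll \lam_{k-1}N$ and $\prod_i\|n\alp_i\|\ll\lam_{k-1}^{k-1}$, but this is for a \emph{single} $N$; the inequality $\ome^\times(\balp)\ge\tfrac{k-1}{k-2}$ is an asymptotic statement about infinitely many $n$, and nothing in your argument prevents the same $k-1$ integers from serving as witnesses for all $N$. More to the point, the ``$k-1$ independent points'' buy nothing: each individual $n_j$ already satisfies the same product bound, so the argument collapses to the $\lam_1$ bound. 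Fortunately this step is also unnecessary. Since $\lam_{k-1}\ge\lam_1$, your bound $\lam_1\gg N^{-\sigma_1}$ with $\sigma_1<\tfrac1{k-1}$ already gives
\[
\lam_k\ll(\lam_1\cdots\lam_{k-1})^{-1}(N\del_1\cdots\del_{k-1})^{-1}\ll\lam_1^{-(k-1)}N^{-1+(k-1)\eps}\ll N^{(k-1)\sigma_1-1+(k-1)\eps},
\]
and $(k-1)\sigma_1<1$ forces the exponent to be negative once $\eps$ is small. So simply delete the $\lam_{k-1}$ clause and use $\sigma_2=\sigma_1$.

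For comparison, the paper bounds $\lam_1$ via the \emph{multiplicative} exponent: from $\prod_i\|n\alp_i\|\gg n^{-\ome^\times-\eps}$ one obtains $\lam_1\gg N^{-\ome^\times/(k-1+\ome^\times)-o(1)}$, and $(k-1)\cdot\tfrac{\ome^\times}{k-1+\ome^\times}<1$ is precisely $\ome^\times<\tfrac{k-1}{k-2}$. Your route via the \emph{simultaneous} exponent ($\max_i\|n\alp_i\|\gg n^{-\ome-\eps}$) gives $\sigma_1\approx\tfrac{\ome}{1+\ome}$, which is actually smaller since $\ome\le\ome^\times/(k-1)$; it therefore works under the weaker hypothesis $\ome(\balp)<\tfrac1{k-2}$ and avoids the need to invoke $\ome^\times$ in this lemma at all.
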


\bigskip
Our approach to analysing $\tilde B$ is similar to that of Tao and Vu \cite{TV2008}. Under our hypotheses, we are able to obtain the important inequalities $N_i \ge N^\eps$ ($1 \le i \le k$), and also to deal with the inhomogeneous shift. These two features are not present in \cite{TV2008}, which is more general.

\subsection{Homogeneous structure}

We begin with the homogeneous lifted Bohr set
\[
\tilde B_0 := \Bigl\{ (n, a_1, \ldots, a_{k-1}) \in \bZ^k: |n| \le \frac{N}{10}, | n \alp_i - a_i| \le \frac1{10}\del_i \quad (1 \le i \le k-1) \Bigr\}.
\]
This consists of the lattice points in the region
\[
\cR := \Bigl\{ (n, a_1, \ldots, a_{k-1}) \in \bR^k: |n| \le \frac{N}{10}, | n \alp_i - a_i| \le \frac1{10}\del_i \quad (1 \le i \le k-1) \Bigr\}.
\]
Define
\[
\lam = (\del_1 \cdots \del_{k-1} N )^{1/k}, \qquad \cS = \lam^{-1} \cR.
\]
Let $\lam_1 \le \lam_2 \le \cdots \le \lam_k$ be the reduced successive minima \cite[Lecture X]{Sie1989} of the symmetric convex body $\cS$. Corresponding to these are vectors $\bv_1, \ldots, \bv_k \in \bZ^k$ whose $\bZ$-span is $\bZ^k$, and for which $\bv_i \in \lam_i \cS$ ($1 \le i \le k$). By the First Finiteness Theorem \cite[Lecture X, \S 6]{Sie1989}, we have
\begin{equation} \label{FFT}
\lam_1 \cdots \lam_k \asymp_k \vol(\cS)^{-1} \asymp 1.
\end{equation}
We choose moduli parameters $A_i = |\pi_1(\bv_i)|$ ($1 \le i \le k$). As
\[
\det(\bv_1,\ldots, \bv_k) = \pm 1,
\]
we must have \eqref{coprime}.

Next, we bound $\lam_1$ from below. We know that
\[
\bv_1 \in \lam_1 \cS = \frac{\lam_1}{\lam} \cR
\]
has integer coordinates, so with $n = |\pi_1(\bv_1)|$ we have 
\[
1 \le n \le \frac{\lam_1}{10 \lam} N, \qquad \| n \alp_i \| \le \frac{\lam_1}{10 \lam} \del_i \quad (1 \le i \le k-1),
\]
and so
\[
\| n \alp_1 \| \cdots \| n \alp_{k-1} \| \ll (\lam_1/\lam)^{k-1} \del_1 \cdots \del_{k-1} \ll (\lam_1/\lam)^{k-1}.
\]
On the other hand
\[
\| n \alp_1 \| \cdots \| n \alp_{k-1} \| \gg n^{\eps- \omega^\times(\balp)} \gg (N \lam_1/\lam)^{\eps-\omega^\times(\balp)}.
\]
Together, the previous two inequalities yield
\[
(\lam_1/\lam)^{k-1 +\omega^\times(\balp)-\eps} \gg N^{\eps - \omega^\times(\balp)},
\]
and therefore
\[
\lam_1 \gg \lam N^{\frac{\eps-\ome^\times(\balp)}{k-1 +\omega^\times(\balp) -\eps}}
\gg N^{\frac{1- \eps(k-1)}k + \frac{\eps-\ome^\times(\balp)}{k-1 +\omega^\times(\balp) -\eps}}.
\]

This enables us to bound $\lam_k$ from above: from \eqref{FFT}, we have
\[
\lam_k \ll \lam_1^{1-k} \ll N^{(k-1) \bigl( \frac{\ome^\times(\balp) - \eps}{k-1 +\omega^\times(\balp) -\eps} \: - \: \frac{1-\eps(k-1)}k \bigr)}.
\]
As $\eps$ is small and $\ome^\times(\balp) < \frac{k-1}{k-2}$, the exponent is strictly less than 
\[
(k-1) \Bigl( \frac{ \frac{k-1}{k-2} } {k-1 + \frac{k-1}{k-2}} \: - \: \frac1k \Bigr) - 2 \eps = \frac1k - 2 \eps.
\]
(We interpret the left hand side as a limit if $k=2$.) Since
\[
\lam \gg N^{\frac{1-\eps(k-1)}k},
\]
with $\eps$ small and $N$ large, we conclude that $\lam \ge k \lam_k ( N^{\eps} + 1)$. We now specify our length parameters
\[
N_i =  \Bigl \lfloor \frac{\lam}{k \lam_i} \Bigr \rfloor \ge N^\eps\qquad (1 \le i \le k).
\]

For $i=1,2,\ldots, k$, we have $\bv_i \in \frac{\lam_i}\lam \cR \cap \bZ^k$. The triangle inequality now ensures that
\begin{equation} \label{homog}
\tilde P(\bv_1, \ldots, \bv_k; N_1, \ldots, N_k) \subseteq \tilde B_0.
\end{equation}

\subsection{Finding and adjusting a base point}

By \eqref{NotDualLiouville} and Theorem \ref{BL}, together with the fact that $\eps$ is small, we have $\hat \ome(\balp, \bgam) > \eps$. Hence, there exists $b_0 \in \bN$ such that
\[
b_0 \le \frac N{20}, \qquad \| b_0 \alp_i - \gam_i \| \le \frac{\del_i}{20} \quad (1 \le i \le k-1).
\]
By Dirichlet's approximation theorem \cite[Theorem 4.1]{BRV2016}, choose $s \in \bN$ such that 
\[
s \le \lfloor N/20 \rfloor, \qquad \| s \alp_i \| \le \lfloor N/20 \rfloor^{-1/(k-1)} \quad (1 \le i \le k-1).
\]
As $\omega(\balp) \le \frac{\ome^\times(\balp)}{k-1} < \frac1{k-2}$, and since $\eps$ is small and $N$ large, we must also have 
\[
s \ge N^{\frac{k-2+ \sqrt \eps}{k-1}} \ge N^{\sqrt \eps}.
\]
We modify our basepoint by putting $b := b_0 + s$. By the triangle inequality, this ensures that
\[
N^{\sqrt \eps} \le b \le \frac N{10}, \qquad \| b \alp_i - \gam_i \| \le \frac{\del_i}{10}  \quad (1 \le i \le k-1).
\]
With the base point, moduli parameters, and length parameters specified, we have how defined the generalised arithmetic progression 
\[
P = P^+(b,A_1, \ldots, A_k, N_1, \ldots, N_k).
\]

\subsection{Projection, properness, and size}

First and foremost, we verify the inclusion $P \subseteq B$. Any $n \in P$ has the shape
\[
n = b + \sum_{i \le k} n_i \pi_1(\bv_i)
\]
for some integers $n_1 \in [-N_1, N_1]$, \ldots, $n_k \in [-N_k, N_k]$. By \eqref{homog} and the triangle inequality, we have
\[
|n| \le b + \sum_{i \le k} N_i  A_i \le \frac N {10} + \frac N {10} < N
\]
and, for $i=1,2,\ldots,k-1$,
\[
\| n \alp_i - \gam_i \| \le \| b \alp_i - \gam_i \| + \Bigl\| \pi_1 \Bigl(\sum_j  n_j \bv_j \Bigr) \alp_i \Bigr \| \le \frac{\del_i}{10} + \frac{\del_i}{10} < \del_i.
\]
We conclude that $P \subseteq B$.

Second, we show that $P$ is proper. Suppose that integers $n_i, m_i \in \{ 1,2,\ldots, N_i \}$ ($1 \le i \le k$) satisfy
\[
b + n_1 A_1 + \cdots + n_k A_k = b + m_1 A_1 + \cdots + m_k A_k.
\]
Then, with $(x_1, \ldots, x_k) = (n_1,\ldots,n_k) - (m_1, \ldots, m_k)$, we have 
\[
\sum_{i \le k} x_i | \pi_1(\bv_i)| = 0.
\]
With $y_i = x_i \cdot \sgn( \pi_1(\bv_i) )$ and $\by = (y_1, \ldots, y_k)^T$, we now have
\[
\pi_1(M \by) = 0,
\]
where $M = (\bv_1, \ldots, \bv_k) \in \mathrm{GL}_k(\bZ)$. Moreover
\[
M\by \in \tilde P(\bv_1, \ldots, \bv_k; N_1, \ldots, N_k) \subseteq \tilde B_0,
\]
so we draw the \emph{a priori} stronger conclusion that $M \by = \bzero$. As $M$ is invertible, we obtain $\by = \bzero$, so $\bx = \bzero$, and we conclude that $P$ is proper.

Finally, as $P$ is proper, its cardinality is readily computed as
\[
|P| = N_1 \cdots N_k \gg \prod_{i \le k} (\lam/ \lam_i)^k \gg \lam^k = \del_1 \cdots \del_{k-1}N .
\]
This completes the proof of Lemma \ref{inner}.

\subsection{Structure outside Bohr sets, and an upper bound on the cardinality}

In this subsection we provide an `outer' construction, complementing the structural lemma of the previous subsection. For the purpose of Theorem \ref{main}, we only require this for homogeneous Bohr sets (those with $\bgam = \bzero$). A standard counting trick will then enable us to handle the shift $\bgam$, accurately bounding the size of $B_\balp^\bgam(N;\bdel)$. Put $\tau = \sqrt \eps$.

\begin{lemma} [Outer structure] \label{outer} If
\[
N^{-\tau} \le \del_i \le 2 \qquad (1 \le i \le k-1)
\]
then there exists a generalised arithmetic progression
\[
P' = P(0; A_1, \ldots, A_k; N_1, \ldots, N_k)
\]
containing $B_\balp^\bzero(N;\bdel)$, for which $|P'| \ll \del_1 \cdots \del_{k-1}N$.
\end{lemma}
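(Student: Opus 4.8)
The plan is to run the Bohr-to-progression correspondence of \S\ref{Bohr} in the opposite direction, recycling the lattice $\bZ^k$, the convex body $\cS$, and the reduced basis $\bv_1,\dots,\bv_k$ from the proof of Lemma~\ref{inner}. As the shift is trivial here, the object to lift is $\tilde B' := \cR'\cap\bZ^k$, where
\[
\cR' = 10\cR = \bigl\{(n,a_1,\dots,a_{k-1})\in\bR^k : |n|\le N,\ |n\alp_i-a_i|\le\del_i\ (1\le i\le k-1)\bigr\},
\]
so that $B_\balp^\bzero(N;\bdel)=\pi_1(\tilde B')$. Since $\cR'=10\lam\cS$, its reduced successive minima are $\lam_i/(10\lam)$, with the \emph{same} reduced basis $\bv_1,\dots,\bv_k$, and we keep $A_i=|\pi_1(\bv_i)|$. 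The target progression will be $P'=P(0;A_1,\dots,A_k;N_1,\dots,N_k)$ with lengths $N_i\asymp\lam/\lam_i$.

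The single non-routine step — and the main obstacle — is to re-establish that $\lam_k$ is small relative to $\lam$, in fact that $\lam_k=o(\lam)$, now under the weaker hypothesis $\del_i\ge N^{-\tau}$. I would revisit the lower bound on $\lam_1$ from the proof of Lemma~\ref{inner}: that argument only invoked $\del_i\le1$ in order to discard the product $\del_1\cdots\del_{k-1}$, so it survives verbatim with $\del_i\le2$ and gives $\lam_1\gg\lam N^{-\beta/(k-1)}$, where $\beta:=\frac{(k-1)(\ome^\times(\balp)-\eps)}{k-1+\ome^\times(\balp)-\eps}$, whence $\lam_k\ll\lam_1^{1-k}\ll\lam^{1-k}N^{\beta}$ by \eqref{FFT}. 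Combining this with $\lam^k=\del_1\cdots\del_{k-1}N\ge N^{1-(k-1)\tau}$ — the only place the lower bound $\del_i\ge N^{-\tau}$ is used — yields $\lam_k/\lam\ll N^{\beta-1+(k-1)\tau}$. Since $\ome^\times(\balp)<\frac{k-1}{k-2}$ one checks $\lim_{\eps\to0}\beta=\frac{(k-1)\ome^\times(\balp)}{k-1+\ome^\times(\balp)}<1$, so, as $\eps$ and $\tau=\sqrt\eps$ are small in terms of $\balp$, we get $\beta-1+(k-1)\tau<0$ and hence $\lam_k=o(\lam)$. This also shows that the implied constants carried along in the successive minima can be absorbed without spoiling the final count.

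The remainder is bookkeeping in the geometry of numbers. By a standard property of reduced bases (see \cite[Lecture X]{Sie1989}), every $\bx\in\tilde B'\subseteq 10\lam\cS$ has an expansion $\bx=c_1\bv_1+\cdots+c_k\bv_k$ with $c_i\in\bZ$ and $|c_i|\ll_k\lam/\lam_i$; fixing that implied constant $C_0=C_0(k)$, I would put $N_i=\lceil C_0\lam/\lam_i\rceil$, so that $N_i\ge C_0\lam/\lam_k\to\infty$ and $1\le N_i\le 2C_0\lam/\lam_i$ for $N$ large. One also checks $A_i\ge1$: were $\pi_1(\bv_i)=0$, then $\bv_i\in(\lam_i/\lam)\cR$ would have vanishing first coordinate and remaining coordinates $|a_j|\le(\lam_i/\lam)(\del_j/10)\le(\lam_k/\lam)/5=o(1)<1$, forcing $\bv_i=\bzero$, a contradiction. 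Now $B_\balp^\bzero(N;\bdel)\subseteq P'$: given $n$ in the Bohr set, lift it to $\bx=(n,a_1,\dots,a_{k-1})\in\tilde B'$ with $|n\alp_i-a_i|=\|n\alp_i\|$, expand $\bx=\sum_i c_i\bv_i$, and set $n_i=c_i\,\sgn(\pi_1(\bv_i))$; then $|n_i|=|c_i|\le N_i$ and $n=\pi_1(\bx)=\sum_i c_i\pi_1(\bv_i)=\sum_i n_iA_i\in P'$. Finally, $|P'|\le\prod_{i\le k}(2N_i+1)\le 3^k\prod_{i\le k}(2C_0\lam/\lam_i)\ll_k\lam^k/(\lam_1\cdots\lam_k)\asymp_k\lam^k=\del_1\cdots\del_{k-1}N$ by \eqref{FFT}, which is the asserted bound.
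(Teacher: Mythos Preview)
Your proof is correct and follows essentially the same line as the paper's. Both arguments recycle the reduced basis $\bv_1,\dots,\bv_k$ from Lemma~\ref{inner}, set $A_i=|\pi_1(\bv_i)|$ and $N_i\asymp\lam/\lam_i$, and show $\tilde B'\subseteq\tilde P'$ by expanding each lifted point in the basis and bounding the coefficients by $O_k(\lam/\lam_i)$; the paper does this last step explicitly via Cramer's rule and the volume interpretation of determinants, whereas you cite it as a standard reduced-basis fact. Your re-derivation of $\lam_k=o(\lam)$ under the weaker bound $\del_i\ge N^{-\tau}$ is what the paper compresses into the phrase ``with $\tau$ in place of $\eps$'', and your explicit verification that $A_i\ge1$ is a detail the paper leaves implicit.
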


\begin{proof} We initially follow the proof of Lemma \ref{inner}, with $\tau$ in place of $\eps$. Now, however, we enlarge the $N_i$ by a constant factor: let $C_k$ be a large positive constant, and choose $N_i = \lfloor C_k \lam / \lam_i \rfloor \ge N^\tau$ for $i=1,2,\ldots,k$. The cardinality of $P'$ is bounded above as
\[
|P'| \ll_k N_1 \cdots N_k \ll \del_1 \cdots \del_{k-1} N,
\]
so our only remaining task is to show that $B_\balp^\bzero(N;\bdel) \subseteq P'$. We establish, \emph{a fortiori}, that $\tilde B_\balp^\bzero(N;\bdel) \subseteq \tilde P'$. 

Let $(n, a_1,\ldots,a_{k-1}) \in \tilde B_\balp^\bzero(N;\bdel)$. Since $\bv_1,\ldots, \bv_k$ generate $\bZ^k$, there exist $n_1,\ldots,n_k \in \bZ$ such that
\[
\bn := (n, a_1, \ldots, a_{k-1})^T = n_1 \bv_1 + \cdots + n_k \bv_k.
\]
Let $M = (\bv_1,\ldots,\bv_k) \in \mathrm{GL}_k(\bZ)$, and for $i=1,2,\ldots,k$ let $M_i$ be the matrix obtained by replacing the $i$th column of $M$ by $\bn$. Now Cramer's rule gives
\[
|n_i| = |\det(M_i)|.
\]
Observe that $\bn \in 10 \lam \cS$ and $\bv_i \in \lam_i \cS$. Determinants measure volume, so by \eqref{FFT} we have
\[
n_i \ll |\lam \lam_1 \cdots \lam_k / \lam_i| \ll_k | \lam / \lam_i|.
\]
As $C_k$ is large, we have $|n_i| \le N_i$, and so $\bn \in \tilde P'$.
\end{proof}

\begin{cor} [Cardinality bound] \label{card} If 
\[
N^{-\tau} \le \del_i \le 1 \qquad (1 \le i \le k-1)
\]
then
\[
\# B_\balp^\bgam(N;\bdel) \ll \del_1 \cdots \del_{k-1} N.
\]
\end{cor}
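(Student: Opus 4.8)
The plan is to deduce the inhomogeneous cardinality bound from the homogeneous one (Lemma \ref{outer}) by a standard covering/counting trick. The key observation is that if $B_\balp^\bgam(N;\bdel)$ is non-empty, say $b \in B_\balp^\bgam(N;\bdel)$, then subtracting $b$ translates the inhomogeneous Bohr set into a homogeneous one with a slightly enlarged radius: for any $n \in B_\balp^\bgam(N;\bdel)$, the difference $m = n - b$ satisfies $|m| \le 2N$ and $\|m\alp_i\| \le \|n\alp_i - \gam_i\| + \|b\alp_i - \gam_i\| \le 2\del_i$ for each $i$. Hence $B_\balp^\bgam(N;\bdel) \subseteq b + B_\balp^\bzero(2N; 2\bdel)$, and in particular
\[
\# B_\balp^\bgam(N;\bdel) \le \# B_\balp^\bzero(2N; 2\bdel).
\]
If $B_\balp^\bgam(N;\bdel)$ is empty there is nothing to prove.

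Next I would apply Lemma \ref{outer} to the homogeneous Bohr set $B_\balp^\bzero(2N; 2\bdel)$. One must check the hypotheses: with $2\del_i$ in place of $\del_i$ and $2N$ in place of $N$, the upper constraint reads $2\del_i \le 2$, which holds since $\del_i \le 1$; the lower constraint reads $2\del_i \ge (2N)^{-\tau}$, which is implied by $\del_i \ge N^{-\tau}$ for $N$ large (indeed $2\del_i \ge 2N^{-\tau} \ge (2N)^{-\tau}$). Lemma \ref{outer} then yields a generalised arithmetic progression $P'$ containing $B_\balp^\bzero(2N; 2\bdel)$ with $|P'| \ll (2\del_1)\cdots(2\del_{k-1})(2N) \ll_k \del_1 \cdots \del_{k-1} N$. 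Combining the two displayed inequalities gives
\[
\# B_\balp^\bgam(N;\bdel) \le \# B_\balp^\bzero(2N; 2\bdel) \le |P'| \ll \del_1 \cdots \del_{k-1} N,
\]
as required, the implied constant depending only on $k$ and $\balp$ (through the constants in Lemma \ref{outer}).

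There is essentially no serious obstacle here; the only point requiring a small amount of care is the bookkeeping of constants, namely verifying that the hypotheses of Lemma \ref{outer} survive the passage $\del_i \mapsto 2\del_i$, $N \mapsto 2N$, and that the factor $2^k$ introduced into the volume bound is harmless (it is absorbed into the $\ll_k$). One could equally phrase the reduction using the fact, already exploited in the proof of Lemma \ref{inner}, that $\hat\ome(\balp,\bgam) > 0$ guarantees $B_\balp^\bgam$ is non-empty for the relevant range of $\bdel$, but the cleaner route is simply to treat the empty case trivially and otherwise translate by an arbitrary element of the set.
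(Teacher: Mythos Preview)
Your proof is correct and follows essentially the same approach as the paper: translate by an arbitrary element of the (assumed non-empty) inhomogeneous Bohr set to land in a homogeneous Bohr set with doubled parameters, then invoke Lemma~\ref{outer}. Your version is in fact slightly more careful than the paper's, which writes the target as $B_\balp^\bzero(N;2\bdel)$ rather than $B_\balp^\bzero(2N;2\bdel)$; your explicit verification of the hypotheses of Lemma~\ref{outer} under the passage $(\bdel,N)\mapsto(2\bdel,2N)$ is also a welcome point of rigour.
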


\begin{proof} We may freely assume that $B_\balp^\bgam(N;\bdel)$ is non-empty. Fix $n_0 \in B_\balp^\bgam(N;\bdel)$. By the triangle inequality, the function $n \mapsto n-n_0$ defines an injection of $B_\balp^\bgam(N;\bdel)$ into $B_\balp^\bzero(N; 2\bdel)$, so
\[
|B_\balp^\bgam(N;\bdel)| \le \max\{1, |B_\balp^\bzero(N; 2\bdel)| \}.
\]
An application of Lemma \ref{outer} completes the proof.
\end{proof}

\section{The preponderance of reduced fractions}
\label{average}

In this section, we use the generalised arithmetic progression structure to control the average behaviour of the Euler totient function $\varphi$ on 
\[
\hat B_\balp^\bgam (N; \bdel) := B_\balp^\bgam (N; \bdel) \cap [N^{\sqrt \eps}, N].
\]
The AM--GM inequality \cite[Ch. 2]{Ste2004} will enable us to treat each prime separately, at which point we can employ the geometry of numbers.

\begin{lemma} [Good averaging]\label{averagegood}
Let $N^{- \eps} \le \del_1, \ldots, \del_{k-1} \le 1$. Then
\[
\sum_{n \in \hat B_\balp^\bgam (N; \bdel) } \frac{\varphi(n)}n \gg \del_1 \cdots \del_{k-1} N.
\]
\end{lemma}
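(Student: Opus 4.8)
The plan is to write $\frac{\varphi(n)}{n} = \prod_{p \mid n}(1 - 1/p)$ and to compare the sum in question with the full count $\# \hat B_\balp^\bgam(N;\bdel)$, which by Lemma \ref{inner} is $\gg \del_1 \cdots \del_{k-1} N$ (the inner progression $P$ from that lemma has cardinality $\gg \del_1 \cdots \del_{k-1} N$ and lies inside $\hat B_\balp^\bgam(N;\bdel)$, since its base point satisfies $b \ge N^{\sqrt\eps}$ and all its elements lie in $[1,N]$). The obstruction is that $\varphi(n)/n$ can be small for $n$ with many small prime factors, so I need to show such $n$ are rare within the Bohr set. First I would fix a large cutoff $P_0 = P_0(\eps,k)$ and split $\prod_{p\mid n}(1-1/p)$ into the contribution of primes $p \le P_0$ and primes $p > P_0$; the tail $\prod_{p \mid n, \, p > P_0}(1 - 1/p) \ge \prod_{p > P_0}(1 - 1/p)^{\omega_{>P_0}(n)}$ is $\gg 1$ once one notes $n \le N$ has $O(\log N/\log P_0)$ prime factors exceeding $P_0$ — actually the cleanest route is: for $n \le N$, $\prod_{p \mid n,\, p>P_0}(1-1/p) \gg_{P_0} 1$ fails in general, so instead I bound $\sum_{n \in \hat B} \bigl(1 - \frac{\varphi(n)}{n}\bigr)$ and show it is at most $\tfrac12 \#\hat B$ for suitable $P_0$.

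Concretely, the key step is the pointwise–to–averaged reduction via inclusion–exclusion over small primes. Writing $1 - \frac{\varphi(n)}{n} = \sum_{p \mid n} \frac1p \prod_{q \mid n,\, q < p}(1 - 1/q) \le \sum_{p \mid n} \frac1p$, we get
\[
\sum_{n \in \hat B_\balp^\bgam(N;\bdel)} \Bigl(1 - \frac{\varphi(n)}{n}\Bigr) \le \sum_{p} \frac1p \, \# \{ n \in \hat B_\balp^\bgam(N;\bdel) : p \mid n \}.
\]
For each prime $p$, the set $\{ n \in B_\balp^\bgam(N;\bdel) : p \mid n\}$ is itself (after the substitution $n = pm$) governed by a Bohr-type condition on $m$ with modulus $N/p$ and with the $\alp_i$ replaced by $p\alp_i$; alternatively, and more robustly, I use the generalised arithmetic progression $P'$ from the Outer structure Lemma \ref{outer}: since $B_\balp^\bgam(N;\bdel) \subseteq n_0 + P(0;A_1,\dots,A_k;N_1,\dots,N_k)$ for a suitable translate, counting elements divisible by $p$ reduces to counting lattice points $(n_1,\dots,n_k)$ with $|n_i| \le N_i$ in a single congruence class modulo $p$. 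By the AM–GM inequality applied to the $N_i$ (which the paper flags as the relevant tool), the number of such points is $\ll \frac{N_1 \cdots N_k}{p} + (\text{lower-order in the } N_i)$, and since each $N_i \ge N^\eps$ is large, for $p \le N^\eps$ this is $\ll \frac{1}{p}\, \del_1\cdots\del_{k-1}N$, while for $p > N^\eps$ one uses $\#\{n \le N : p \mid n\} \le N/p$ directly together with the trivial bound, noting $\sum_{p > N^\eps} 1/p \cdot (N/p) = o(\del_1\cdots\del_{k-1}N)$ is false in general — so here I instead note $\del_1\cdots\del_{k-1} \ge N^{-(k-1)\eps}$ and handle large primes by the crude bound $\#\{n \le N: p\mid n\} + 1$ summed against $\sum_{N^\eps < p \le N} 1/p \ll \log(1/\eps) = O_\eps(1)$, which must be absorbed carefully.

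Assembling, I obtain
\[
\sum_{n \in \hat B_\balp^\bgam(N;\bdel)} \Bigl(1 - \frac{\varphi(n)}{n}\Bigr) \ll \Bigl( \sum_{p \le P_0} \frac1p \Bigr) \del_1 \cdots \del_{k-1} N + \Bigl(\sum_{p > P_0} \frac1{p^2}\Bigr) \del_1\cdots\del_{k-1}N + (\text{negligible}),
\]
and choosing $P_0$ large but fixed makes the first term small (since $\sum_{p \le P_0} 1/p$ grows like $\log\log P_0$, this does NOT become small — so the genuine argument must keep the small-prime sum multiplied by a factor $\prod_{q < p}(1-1/q)$, i.e. work with the exact identity $1 - \varphi(n)/n = \sum_{p\mid n}\frac1p\prod_{q\mid n, q<p}(1-1/q)$ and exploit that $\sum_p \frac1p \prod_{q<p}(1-1/q) = 1$, giving a true bound of the form $\le (1 - c)\,\#\hat B + o(\#\hat B)$ for some $c > 0$ coming from restricting to $n$ coprime to all primes below $P_0$). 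The main obstacle, then, is precisely this: one must show that a \emph{positive proportion} of $n \in \hat B_\balp^\bgam(N;\bdel)$ are coprime to every prime $p \le P_0$, which is a sieve statement inside the progression $P$ of Lemma \ref{inner} — and this is exactly where the properness of $P$ and the lower bounds $N_i \ge N^\eps$ on its side-lengths are essential, since they let me run a lower-bound sieve (or an elementary inclusion–exclusion over the $\ll 1$ primes $p \le P_0$) on the lattice-point count $N_1 \cdots N_k$ with multiplicative error control. Once that proportion is secured, $\sum_{n \in \hat B} \varphi(n)/n \ge c \cdot \#\hat B \gg \del_1 \cdots \del_{k-1} N$, completing the proof.
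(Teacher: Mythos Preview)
Your proposal has a genuine gap in its final step, and it also misidentifies where the AM--GM inequality enters.

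You end by arguing: sieve inside the inner progression $P$ to show a positive proportion of $n \in P$ are coprime to every prime $p \le P_0$ (for some fixed $P_0$), and conclude $\sum_{n \in \hat B} \varphi(n)/n \ge c \cdot \#\hat B$. This last implication fails. An integer $n \le N$ that is coprime to all primes $\le P_0$ can still satisfy $\varphi(n)/n \to 0$ as $N \to \infty$: if $n$ is a product of $\lfloor \log N / \log P_0 \rfloor$ distinct primes just above $P_0$, then $\varphi(n)/n \le (1 - 1/P_0')^{\log N/\log P_0'} = N^{-c(P_0)}$ for some $c(P_0)>0$. So securing that positive proportion does \emph{not} yield $\varphi(n)/n \gg 1$ pointwise, and your first-moment bound $\sum_{n}(1 - \varphi(n)/n) \le \sum_p \alp_p |P|/p$ only gives $\le C|P|$ for a constant $C$ you cannot force below $1$ (indeed $\sum_p p^{-1-\eps}$ blows up as $\eps \to 0^+$, and the implicit constant in $\alp_p \ll p^{-\eps}$ is not controlled either). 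You correctly sensed this obstruction midway through but did not resolve it.

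The paper's device is to apply AM--GM not to the side-lengths $N_i$ but to the \emph{values} $\varphi(n)/n$ over $n \in P$: the arithmetic mean is bounded below by the geometric mean
\[
X = \Bigl(\prod_{n \in P} \frac{\varphi(n)}{n}\Bigr)^{1/|P|} = \prod_{p \le N}(1-1/p)^{\alp_p}, \qquad \alp_p := \frac{\#\{n \in P: p \mid n\}}{|P|}.
\]
Now one only needs $\log(1/X) = \sum_p \alp_p \log\frac{1}{1-1/p} \ll \sum_p \alp_p / p$ to be $O(1)$, \emph{not} strictly less than $1$. The equidistribution input you correctly identified --- using $\gcd(A_1,\ldots,A_k)=1$ and $N_i \ge N^\eps \ge p^\eps$ together with a lattice-point count (the paper uses Davenport's theorem) to get $\alp_p \ll 1/p + 1/\min_i N_i \ll p^{-\eps}$ --- then gives $\sum_p \alp_p/p \ll \sum_p p^{-1-\eps} \ll 1$, whence $X \gg 1$ and $\sum_{n\in P}\varphi(n)/n \ge X|P| \gg \del_1\cdots\del_{k-1}N$. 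The passage to geometric means is precisely what converts ``bounded'' into ``bounded away from zero'' and is the missing idea in your argument.
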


\begin{proof}
Let $P \subset \hat B_\balp^\bgam (N; \bdel)$ 
denote the generalised arithmetic progression from Lemma \ref{inner}.
Since
\[
\sum_{n \in \hat B_\balp^\bgam (N; \bdel) } \frac{\varphi(n)}n \geq
\sum_{n \in P } \frac{\varphi(n)}n,
\]
and since $|P| \gg \del_1 \cdots \del_{k-1}N$, 
the AM--GM inequality implies that it suffices to establish that 
\begin{equation}\label{defofX}
X := \Biggl( \prod_{n\in P } \frac{\varphi (n)}{n} 
\Biggr)^{\frac{1}{\vert P \vert }} \gg 1.
\end{equation}
To this end, we observe that the well-known relation
\[
\frac{\varphi (n)}{n} = \prod_{p \mid n} (1- 1/p)
\]
permits us to rewrite $X$ as 
\[
X = \prod_{p \le N} (1- 1/p)^{\alp_{p}},
\]
where $\alpha_{p} = \vert P \vert ^{-1} \vert 
\{n\in P: n \equiv 0 \mmod p \} \vert $.
It therefore remains to show that
\begin{equation}\label{alphasmall}
\alp_{p} \ll p^{- \varepsilon}.
\end{equation}
Indeed, once we have (\ref{alphasmall}) at hand, we can infer that
\[
\log(1/X) \le \sum_p \alp_p \log(1+2/p) \ll \sum_p p^{-\eps} \log(1+2/p),
\]
whereupon the trivial inequality $\log(1+2/p) \le 2/p$ yields
\[
\log(1/X) \ll \sum_p p^{-1-\eps} \ll 1,
\]
implying (\ref{defofX}).

We proceed to establish \eqref{alphasmall}. We may suppose that $\alp_p > 0$, which allows us to fix positive integers $n_1^{*} \le N_1, \ldots, n_k^* \le N_k$ for which
\[
b+ A_1 n_1^* + \cdots + A_{k} n_{k}^* \equiv 0 \mmod p.
\]
Then
\begin{equation} \label{cong}
A_1 n'_1 + \cdots + A_{k} n'_{k} \equiv 0 \mmod p,
\end{equation}
where $n'_i = n_i - n_i^*$ ($1 \le i \le k$) are integers such that $(n'_1, \ldots, n'_k)$ lies in the box
\[
\cB := [-N_1, N_1] \times \cdots \times [-N_{k}, N_{k}] \subseteq \bR^{k}.
\]
In particular, the quantity $|P| \alp_p$ is bounded above by the number of integer 
solutions to $\eqref{cong}$ in the box $\cB$. 

Let $\mathcal{J}$ denote the set of $i\in \{1, \ldots, k \}$ 
such that $p \mid A_{i}$, and let $\mathcal{J}^{c}$ 
be its complement in $\{1, \ldots, k \}$.
We note from \eqref{coprime} that
\[
\vert \mathcal{J} \vert \leq k-1.
\]
Thus, the number $\cN$ of solutions to (\ref{cong}) is at most 
$\displaystyle \prod_{i \in \mathcal{J}} (2 N_{i} + 1)$ times 
the number of integer vectors
$(n''_{1},\ldots,n''_{\vert \mathcal{J}^{c} \vert})$ 
in the box $\cB_{\mathcal{J}} 
:= \displaystyle \prod_{i\in \mathcal{J}^c} \left[ - N_{i}, N_{i} \right] $
which, additionally, satisfy the congruence
\begin{equation} \label{reducong}
\sum_{i\in \mathcal{J}^{c}}
A_{i} n''_{i} \equiv 0 \mmod p.
\end{equation}
As (\ref{reducong}) defines a full-rank lattice in $\bR^{\vert \mathcal{J}^{c} \vert}$ 
of determinant $p$, and we can exploit a counting result due to Davenport \cite{D1951}; 
see also \cite{BW2013} and \cite[p. 244]{T1993}. Our precise statement follows from \cite[Lemmas 2.1 and 2.2]{BW2013}.

\begin{thm} [Davenport]\label{DavenportBound}
Let $d$ be a positive integer, and $\cS \subset \bR^{d}$ compact. 
Suppose the two following conditions are met:
\vspace{7pt}

(i) Any line intersects $\cS$ in a set of points which, if non-empty,
consists of at most $h$ intervals, and

\vspace{7pt}
(ii) the condition (i) holds true --- with $j$ in place of $d$ --- for 
any projection of $\cS$ onto a $j$-dimensional subspace. 

\vspace{7pt}
\noindent 
Moreover, let $\mu_1 \le \cdots \le \mu_d$ denote the successive minima, 
with respect to the Euclidean unit ball, 
of a (full-rank) lattice $\Lambda\subset \bR^{d}$. Then
\[
\biggl\vert \vert \cS \cap \Lambda \vert 
- \frac{\mathrm{vol (\cS)}}{\det \: \Lambda}
\biggr\vert 
\ll_{d,h} \sum_{j=0}^{d-1} \frac{V_{j}(\cS)}{\mu_1 \cdots \mu_{j}},
\]
where $V_{j}(\cS)$ is the supremum of the $j$-dimensional volumes of the projections 
of $\cS$ onto any $j$-dimensional subspace, 
and for $j=0$ the convention $V_{0}(\cS)=1$ is to be used.
\end{thm}

As $\mathcal{B}_{\mathcal{J}}$ satisfies the hypotheses of 
Theorem \ref{DavenportBound}, with $h=1$ and $d = |\cJ^c|$, and with each $V_{j}(\mathcal{B}_{\mathcal{J}})$ 
less than the surface area of $\mathcal{B}_{\mathcal{J}}$,
we obtain
\[
\cN \ll 
\Biggl( \prod_{i \in \mathcal{J}} N_{i} \Biggr)
	\biggr( 
		\frac{\prod_{i \in \mathcal{J}^{c}}N_{i}}{p} 
		+  \sum_{i\in \mathcal{J}^{c}} 
			\frac{\prod_{j \in \mathcal{J}^{c}}N_{j}}{N_{i}} 
	\biggl).
\]
Here we have used the fact that $\mu_d \ge \cdots \ge \mu_1 \ge 1$, which follows from our lattice being a sublattice of $\bZ^d$. Therefore
\[
\alp_{p} 
\ll
\Biggl( \prod_{i \in \mathcal{J}^{c}} N_{i} \Biggr)^{-1}
	\biggr( 
		\frac{\prod_{i \in \mathcal{J}^{c}}N_{i}}{p} 
		+  \sum_{i\in \mathcal{J}^{c}} 
			\frac{\prod_{j \in \mathcal{J}^{c}}N_{j}}{N_{i}} 
	\biggl)
\ll
\frac{1}{p} 
+ \frac{1}{\min \{ N_{i} :\, i \in \mathcal{J}^{c}\}},
\]
and Lemma \ref{inner} guarantees that
$N_{i} \geq N^{\eps} \geq p^{\varepsilon}$ 
for $i=1,\ldots,k$. Now (\ref{alphasmall}) follows, and the proof is complete.
\end{proof}

\section{Generalised sums of reciprocals of fractional parts}
\label{fractional}

As in \cite{BHV2016, Cho2018}, an essential part 
of the analysis is to estimate generalisations 
of sums of reciprocals of fractional parts.
Recall that we fixed real numbers $\alp_1,\ldots,\alp_{k-1}$ 
and $\gam_1,\ldots, \gam_{k-1}$, 
with $\ome^\times(\alp_1,\ldots,\alp_{k-1})< \frac{k-1}{k-2}$, 
from the beginning. As in \cite{Cho2018}, 
we restrict the range of summation. Let
\[
G = \{ n \in \bN: \| n \alp_i - \gam_i \| 
\ge n^{- \sqrt \eps} \quad (1 \le i \le k-1) \}.
\]
We consider the sums $T_N(\balp, \bgam)$ and $T_N^*(\balp,\bgam)$ 
defined in \eqref{T}, \eqref{Tstar}, and show that
\begin{equation} \label{order}
T_N(\balp, \bgam) \asymp  T_N^*(\balp, \bgam)  \asymp N (\log N)^{k-1}.
\end{equation}

We begin with an upper bound.

\begin{lemma} \label{upper} We have
\[
T_N(\balp, \bgam) \ll N (\log N)^{k-1}.
\]
\end{lemma}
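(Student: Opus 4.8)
The plan is to bound $T_N(\balp,\bgam)$ by a sum over dyadic ranges of the Bohr-set counting function, using the cardinality estimate of Corollary~\ref{card}. First I would split the summation range $n \le N$, $n \in G$ into $O(\log N)$ dyadic blocks $M < n \le 2M$. On each block, since $n \in G$ forces $\| n \alp_i - \gam_i \| \ge n^{-\sqrt\eps} \ge N^{-\sqrt\eps}$, each factor in the denominator of $T_N$ lies in the range $[N^{-\sqrt\eps}, 1/2]$. The idea is then to perform a further dyadic decomposition of each of the $k-1$ fractional parts: for each $i$, write $\|n\alp_i - \gam_i\| \asymp \del_i$ where $\del_i$ ranges over negative powers of $2$ between $N^{-\sqrt\eps}$ and $1$. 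This produces $O((\log N)^{k-1})$ choices of $\bdel = (\del_1, \ldots, \del_{k-1})$ per dyadic block in $n$.

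For a fixed dyadic block $M < n \le 2M$ and a fixed choice of $\bdel$, the set of $n$ contributing is essentially the difference of two Bohr sets, hence contained in $B_\balp^\bgam(2M; 2\bdel)$, whose cardinality is $\ll \del_1 \cdots \del_{k-1} M$ by Corollary~\ref{card} (valid because $N^{-\tau} = N^{-\sqrt\eps} \le \del_i \le 1$, and $\tau = \sqrt\eps$). Each such $n$ contributes $\asymp (\del_1 \cdots \del_{k-1})^{-1}$ to $T_N$, so the total contribution of this $(\text{block}, \bdel)$ pair is
\[
\ll (\del_1 \cdots \del_{k-1})^{-1} \cdot \del_1 \cdots \del_{k-1} M \ll M.
\]
Crucially, this bound is independent of $\bdel$. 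Summing over the $O((\log M)^{k-1}) \le O((\log N)^{k-1})$ choices of $\bdel$ gives $\ll M (\log N)^{k-1}$ for the block, and summing the geometric-type series over the $O(\log N)$ dyadic blocks $M \in \{1, 2, 4, \ldots\}$ up to $N$ contributes a further factor of $O(1)$ from $\sum_M M \ll N$ — wait, that over-counts; rather, summing $M(\log N)^{k-1}$ over dyadic $M \le N$ gives $\ll N (\log N)^{k-1}$ since $\sum_{j: 2^j \le N} 2^j \ll N$.

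The main obstacle is bookkeeping the boundary cases of the dyadic decomposition: small values of $n$ (where $n < N^{\sqrt\eps}$, say) must be handled separately, but there the factors $\|n\alp_i - \gam_i\|^{-1} \ll n^{\sqrt\eps}$ are small enough that the contribution is negligible, $\ll N^{\sqrt\eps \cdot k} = o(N)$. One should also confirm that the constraint $\del_i \le 1$ rather than $\le 2$ suffices when passing from $\|n\alp_i - \gam_i\| \asymp \del_i$ to membership in a Bohr set — this is why one uses $B_\balp^\bgam(2M; 2\bdel)$ and the hypothesis of Corollary~\ref{card} with the slightly smaller upper threshold, which is why $G$ was defined with exponent $\sqrt\eps = \tau$ rather than $\eps$. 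Everything else is routine summation of geometric series.
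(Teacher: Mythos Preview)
Your approach is correct and essentially the same as the paper's: dyadically decompose each $\|n\alp_i - \gam_i\|$ into $O(\log N)$ ranges (using $n\in G$ for the lower cutoff), bound the number of $n$ in each cell via Corollary~\ref{card}, and observe that each cell contributes $O(N)$. The paper's version is slightly cleaner in that it omits your extra dyadic decomposition in $n$ and applies Corollary~\ref{card} once with parameter $N$ --- so the hypothesis $\del_i \ge N^{-\tau}$ follows directly from $n\in G$ --- thereby avoiding your separate treatment of small $n$ and the need to check $(2M)^{-\tau} \le \del_i$ block by block (note your stated justification ``$N^{-\tau}\le\del_i$'' is for the wrong parameter; you actually need $(2M)^{-\tau}\le 2\del_i$, which does hold since $n>M$ and $n\in G$ give $\|n\alp_i-\gam_i\|\ge (2M)^{-\sqrt\eps}$).
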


\begin{proof}
First, we decompose $T_N(\balp, \bgam)$ so that 
the size parameters $\delta_{i}$ determine dyadic ranges: 
\[
T_N(\balp, \bgam) =  \sum_{i_{1},\ldots,i_{k-1} \in \bZ} \quad
\sum_{\substack{n \le N, n\in G \\ 
\, 2^{-(i_{j}+1)}      
<   \| n \alp_j - \gam_j \|   \le 2^{-i_{j}}}} 
\frac1{\| n \alp_{1} - \gam_{1}\| \cdots \| n \alp_{k-1} - \gam_{k-1}\|}.
\]
For each $j\leq k-1$ there are $O(\log N)$ choices of $i_{j}$ for which the inner sum is non-zero, owing to our choice of $G$. Therefore the inner sum is non-zero $O( (\log N)^{k-1})$ times. Furthermore, the inner sum is bounded above by
\[
\Bigl( \prod_{j \le k-1} 2^{i_j + 1} \Bigr) \# B_\balp^\bgam(N; 2^{-i_1}, \ldots, 2^{-i_{k-1}})
\]
which, by Corollary \ref{card}, is $O(N)$. This completes the proof.
\end{proof}

We also require a lower bound for $T_N^*(\balp, \bgam)$. 

\begin{lemma} \label{lower} We have
\[
T_N^*(\balp, \bgam) \gg N (\log N)^{k-1}.
\]
\end{lemma}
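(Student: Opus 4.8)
The plan is to establish the matching lower bound $T_N^*(\balp,\bgam) \gg N(\log N)^{k-1}$ by a dyadic decomposition, just as in the upper bound, but now exploiting the fine structure of Bohr sets rather than merely their cardinality. First I would note that it suffices to find a single well-chosen dyadic box contributing $\gg N(\log N)^{k-1}$, or rather a family of $\asymp (\log N)^{k-1}$ such boxes each contributing $\gg N$. Concretely, run over tuples $(i_1,\ldots,i_{k-1})$ with each $i_j$ in the range $[\sqrt\eps \log_2 N', \,(1-\sqrt\eps)\log_2 N]$ for a suitable $N' \asymp N$, so that $\del_j := 2^{-i_j}$ satisfies $N^{-\sqrt\eps} \le \del_j \le 1$; there are $\asymp (\log N)^{k-1}$ such tuples. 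For each of them, restrict summation in $T_N^*$ to those $n \in \hat B_\balp^\bgam(N;\bdel)$ with $\del_j/2 < \|n\alp_j - \gam_j\| \le \del_j$; the summand is then $\asymp \frac{\varphi(n)}{n}\cdot (\del_1\cdots\del_{k-1})^{-1}$.

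Next I would replace the awkward annular constraint $\del_j/2 < \|n\alp_j - \gam_j\|$ by simply working inside the generalised arithmetic progression $P = P^+(b;A_1,\ldots,A_k;N_1,\ldots,N_k) \subseteq \hat B_\balp^\bgam(N;\bdel)$ furnished by Lemma \ref{inner}, applied with $\bdel$ replaced by $\bdel/2$ say, so that every $n \in P$ automatically has $\|n\alp_j - \gam_j\| \le \del_j/2 < \del_j$, and discard the lower annular constraint entirely at the cost of only an overcounting (which would inflate $T_N^*$, the wrong direction), so instead I keep the full dyadic shell but observe $P_{\bdel} \subseteq \{n : \|n\alp_j-\gam_j\|\le \del_j\}$ and bound the shell sum below by (shell sum) $\ge$ (sum over $P_{\bdel}$) $-$ (sum over $P_{\bdel/2}$), i.e.\ a telescoping/inclusion--exclusion between nested Bohr sets. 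On each $P_{\bdel}$ the summand is $\ge \frac{\varphi(n)}{n}\del_1^{-1}\cdots\del_{k-1}^{-1}$, and by Lemma \ref{averagegood} we have $\sum_{n\in \hat B_\balp^\bgam(N;\bdel)} \frac{\varphi(n)}{n} \gg \del_1\cdots\del_{k-1}N$, so this contributes $\gg N$ per dyadic tuple. Summing over the $\asymp(\log N)^{k-1}$ admissible tuples gives $\gg N(\log N)^{k-1}$.

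The main obstacle is making the shell decomposition genuinely give a \emph{lower} bound: Lemma \ref{averagegood} lower-bounds $\sum \varphi(n)/n$ over a Bohr set $B(N;\bdel)$, but an individual dyadic shell $\{\del_j/2 < \|n\alp_j-\gam_j\| \le \del_j\}$ is a \emph{difference} of two such sets, and a lower bound minus a lower bound is useless. The clean way around this, and the step I expect to require the most care, is to not decompose into disjoint shells at all but to use the upper-cardinality bound (Corollary \ref{card}) on the smaller box and the lower totient average (Lemma \ref{averagegood}) on the larger box only for a \emph{sparse, well-separated} subsequence of dyadic scales — e.g.\ take $\del_j \in \{N^{-\sqrt\eps}, N^{-\sqrt\eps}/M, N^{-\sqrt\eps}/M^2,\ldots\}$ with $M$ a large constant, so the shells $\{\del_j/M < \|n\alp_j-\gam_j\|\le \del_j\}$ have the property that $\#B(N;\bdel/M) \le C M^{-(k-1)}\#B(N;\bdel)$ is a genuinely smaller piece, whence the shell contains $\ge (1 - CM^{-(k-1)})|B(N;\bdel)|$-worth of mass and Lemma \ref{averagegood}'s conclusion survives up to the constant $1-CM^{-(k-1)} \asymp 1$. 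One still gets $\asymp (\log N)^{k-1}$ scales (losing only a constant factor from the sparsification in each coordinate), each contributing $\gg N$, and the proof concludes as above; the implied constants depend only on $\alp_1,\ldots,\alp_{k-1}$ and on $k$, as permitted.
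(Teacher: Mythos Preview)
Your plan is essentially the paper's own argument: pass to an $\eta$-adic (equivalently, $M$-adic with $M=1/\eta$) grid of scales $\bdel$, use Lemma~\ref{averagegood} to get a lower bound $\sum_{\hat B(N;\bdel)}\varphi(n)/n \gg \del_1\cdots\del_{k-1}N$ on the large box, subtract an upper bound on the small box coming from Corollary~\ref{card}, and conclude that each shell contributes $\gg N$, with $\gg(\log N)^{k-1}$ shells. The paper phrases this via the annular sets $\hat B(N;\bdel)\setminus \hat B(N;\eta\bdel)$ and the two-sided estimate $\sum_{\hat B(N;\bdel)}\varphi(n)/n \asymp \del_1\cdots\del_{k-1}N$, but the mechanism is identical.

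There is, however, one genuine inconsistency in your plan that you should repair, and it sits exactly at the point you yourself flag as the main obstacle. You write the shells as product shells $\{\del_j/M < \|n\alp_j-\gam_j\|\le \del_j \text{ for all } j\}$, which are disjoint and therefore sum correctly over the $\asymp(\log N)^{k-1}$ tuples; but then you lower-bound the shell by subtracting only $B(N;\bdel/M)$ with saving $M^{-(k-1)}$. That subtraction corresponds to the \emph{annular} shell $B(N;\bdel)\setminus B(N;\bdel/M)$, and those annular shells are \emph{not} disjoint across independent choices of the $\del_j$ (a point $n$ with one very small $\|n\alp_j-\gam_j\|$ lies in many of them), so the ``sum over $\asymp(\log N)^{k-1}$ tuples'' step would then fail. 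To make the product-shell version go through, you must instead remove the union $\bigcup_{j} B\bigl(N;\del_1,\ldots,\del_j/M,\ldots,\del_{k-1}\bigr)$; by Corollary~\ref{card} each of these $k-1$ slabs has cardinality $\ll M^{-1}\del_1\cdots\del_{k-1}N$, so
\[
\sum_{\substack{n\in \hat B(N;\bdel)\\ \del_j/M<\|n\alp_j-\gam_j\|\ \forall j}} \frac{\varphi(n)}{n}
\;\ge\; c\,\del_1\cdots\del_{k-1}N \;-\; (k-1)C\,M^{-1}\del_1\cdots\del_{k-1}N
\;\gg\; \del_1\cdots\del_{k-1}N
\]
for $M$ large. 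With this correction your argument is complete and matches the paper's. (Two smaller slips: your stated range for the $i_j$ does not give the inequality $N^{-\sqrt\eps}\le\del_j\le 1$ you claim, and the sequence $\{N^{-\sqrt\eps}, N^{-\sqrt\eps}/M,\ldots\}$ runs the wrong way; you want $\del_j\in\{1,1/M,1/M^2,\ldots\}\cap[N^{-\eps},1]$, and you should first reduce from $n\in G$ to the condition $\|n\alp_i-\gam_i\|\ge N^{-\eps}$ as the paper does.)
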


\begin{proof}
First observe that if $N^{\sqrt \eps} \le n \le N$ and 
\[
\| n \alp_i - \gam_i \| \ge N^{- \eps} \qquad (1 \le i \le k-1)
\]
then $n \in G$. It therefore suffices to prove that
\begin{equation} \label{suffices}
\sum_{\substack{N^{\sqrt \eps} \le n \le N \\ 
\| n \alp_i - \gam_i \| \ge N^{- \eps} }} \frac{\varphi(n)}{n 
\| n \alp_1 - \gam_1 \| \cdots \| n \alp_{k-1} - \gam_{k-1} \| } 
\gg N (\log N)^{k-1}.
\end{equation}
Before proceeding in earnest, we note from Lemma \ref{averagegood} and Corollary \ref{card}
that if $N^{-\eps} \leq \delta_1, \ldots, \delta_{k-1}\leq 1$ then
\begin{equation}\label{RightAverage}
\sum_{n \in \hat B_\balp^\bgam(N; \bdel)} \frac{\varphi(n)}n 
\asymp \delta_{1}\cdots\delta_{k-1} N,
\end{equation}
wherein the implied constants depend at most on $\alp_{1},\ldots, \alpha_{k-1}$.

Let $\eta$ be a constant which is small 
in terms of the constants implicit in (\ref{RightAverage}),
and put $\bdel=(\delta_1, \ldots, \delta_{k-1})$. We split the left hand side of (\ref{suffices}) 
into $\gg_\eta (\log N)^{k-1}$ sums, for which $N^{-\eps} \le \del_1, \ldots, \del_{k-1} \le 1$, of the shape
\begin{equation} \label{SplitAvarage}
\sum_{n \in \hat B_\balp^\bgam (N; \bdel)  
\setminus \hat B_\balp^\bgam (N; \eta \bdel) } 
\frac{\varphi(n)}
 {n \| n \alp_1 - \gam_1 \| \cdots \| n \alp_{k-1} - \gam_{k-1} \|}.
\end{equation}
Each of these sums exceeds $\eta^{k-1} (\delta_1 \cdots \delta_{k-1})^{-1}$ times
\[
\sum_{n \in \hat B_\balp^\bgam (N; \bdel)  
\setminus \hat B_\balp^\bgam (N; \eta \bdel) } 
\frac{\varphi(n)}{n}
= 
\sum_{n \in \hat B_\balp^\bgam (N; \bdel)} 
\frac{\varphi(n)}{n} 
- \sum_{n \in \hat B_\balp^\bgam (N; \eta \bdel)} 
\frac{\varphi(n)}{n}.
\]
Since the right hand side is
$ \gg \delta_1 \cdots \delta_{k-1} N$, by \eqref{RightAverage} and $\eta$ being small,
we conclude that the quantity (\ref{SplitAvarage}) is $\gg_\eta N$. 
As $\eta$ need only depend on $\balp$, this entails (\ref{suffices}), and thus completes the proof.
\end{proof}

As $T_N(\balp, \bgam) \ge T^*_N( \balp, \bgam)$, the previous two lemmas imply \eqref{order}.

\section{An application of the Duffin--Schaeffer theorem}
\label{final}

In this section, we finish the proof of Theorem \ref{main}. The overall strategy is to apply the Duffin--Schaeffer theorem (Theorem \ref{DSthm}) to the approximating function
\[
\Psi(n) = \Psi_\balp^\bgam(n) = \begin{cases} \frac{\psi(n)}{\| n \alp_1 - \gam_1 \| 
\cdots \| n \alp_{k-1} - \gam_{k-1} \|}, &\text{if }  n \in G \\
0, &\text{if } n \notin G.
\end{cases}
\]
A valid application of the Duffin--Schaeffer theorem will complete the proof, so we need only verify its hypotheses, namely
\begin{equation} \label{hyp1}
\sum_{n=1}^\infty \frac {\varphi(n)}n \Psi(n) = \infty
\end{equation}
and
\begin{equation} \label{hyp2}
\sum_{n \le N} \frac {\varphi(n)}n \Psi(n) \gg \sum_{n \le N} \Psi(n).
\end{equation}
The inequality \eqref{hyp2} is only needed 
for an infinite strictly increasing sequence of positive integers $N$, 
but we shall prove \emph{a fortiori} that for all large $N$ we have
\begin{equation} \label{hyp21}
\sum_{n \le N} \frac {\varphi(n)}n \Psi(n)  
\gg  \sum_{n \le N} \psi(n) (\log n)^{k-1}
\end{equation}
and
\begin{equation} \label{hyp22}
\sum_{n \le N} \Psi(n) \ll 
\sum_{n \le N} \psi(n) (\log n)^{k-1}.
\end{equation}
Observe, moreover, that \eqref{divergence} 
and \eqref{hyp21} would imply \eqref{hyp1}. 
The upshot is that it remains 
to prove \eqref{hyp21} and \eqref{hyp22}.

Recall the sums $T_N(\balp, \bgam)$ and $T_N^*(\balp, \bgam)$ considered in the previous section, and let $N_0 \in \bN$ 
be a large constant. By partial summation and the fact that 
\[
\psi(n) \ge \psi(n+1),
\]
we have the lower bound
\[
\sum_{n \le N}  \frac {\varphi(n)}n \Psi(n) \ge
\psi(N+1) T_N^*(\balp, \bgam) + \sum_{n = N_0}^N 
(\psi(n) - \psi(n+1)) T_n^*(\balp,\bgam).
\]
Applying Lemma \ref{lower} to continue our calculation yields
\[
\sum_{n \le N}  \frac {\varphi(n)}n \Psi(n) \gg 
\psi(N+1) N (\log N)^{k-1} + \sum_{n = N_0}^N 
(\psi(n) - \psi(n+1)) n (\log n)^{k-1}.
\]
As $\psi(n) \ge \psi(n+1)$ and 
$\displaystyle \sum_{m \le n} (\log m)^{k-1} \le n (\log n)^{k-1}$, 
we now have
\[
\sum_{n \le N}  \frac {\varphi(n)}n \Psi(n) \gg \psi(N+1) 
\sum_{m \le N} (\log m )^{k-1} + \sum_{n = N_0}^N (\psi(n) - \psi(n+1)) 
 \sum_{m \le n} ( \log m )^{k-1}.
\]
Another application of partial summation now gives
\[
\sum_{n \le N}  \frac {\varphi(n)}n \Psi(n) 
\gg \sum_{n = N_0}^N \psi(n) (\log n)^{k-1},
\]
establishing \eqref{hyp21}.

We arrive at the final piece of the puzzle, which is \eqref{hyp22}. By partial summation, we have
\[
\sum_{n \le N}  \Psi(n)  = \psi(N+1)T_N(\balp, \bgam)
+ \sum_{n \le N} (\psi(n) - \psi(n+1)) T_n(\balp,\bgam).
\]
Observe that if $n \le N_0$ 
then $T_n(\balp, \bgam) \le T_{N_0}(\balp, \bgam) \ll 1$. 
Thus, applying Lemma \ref{upper} to continue our calculation yields
\[
\sum_{n \le N}  \Psi(n) 
\ll 1 + \psi(N+1)N (\log N)^{k-1} 
+ \sum_{n = N_0}^N (\psi(n) - \psi(n+1)) n (\log n)^{k-1}.
\]
Partial summation tells us that 
$ \displaystyle \sum_{m \le n} (\log m)^{k-1} 
\gg n (\log n)^{k-1}$, 
and so
\[
\sum_{n \le N}  \Psi(n) 
\ll 1 + \psi(N+1) \Bigl(\sum_{m \le N} (\log m)^{k-1} \Bigr) 
+ \sum_{n = N_0}^N (\psi(n) - \psi(n+1)) \sum_{m \le n} (\log m)^{k-1}.
\]
A further application of partial summation now gives
\[
\sum_{n \le N}  \Psi(n) \ll 1 + \sum_{n = N_0}^N \psi(n) (\log n)^{k-1}.
\]
This confirms \eqref{hyp22}, thereby completing the proof of Theorem \ref{main}.

\providecommand{\bysame}{\leavevmode\hbox to3em{\hrulefill}\thinspace}

\end{document}